\documentclass{article}
\usepackage{graphicx} 
\usepackage{amsmath, amsthm, amssymb, calrsfs, wasysym, verbatim, bbm, color, graphics, geometry, enumerate, mathrsfs, dsfont, cancel, stmaryrd}
\usepackage{envmath}
\usepackage{euscript}
\usepackage{cite}
\usepackage{multicol}
\usepackage{appendix}

\allowdisplaybreaks
\def\R{\ensuremath{\mathbb{R}}}

\def\D{\text{div}}
\newtheorem{theorem}{Theorem}

\newtheorem{prop}[theorem]{Proposition}
\newtheorem{Remark}[theorem]{Remark}

\title{Density dependent viscosity for the Poisson-Nernst-Planck-Compressible Navier-Stokes system}
\author{D. Bresch,  M. Kazakova, C. Tonnelier}
\date{January 2026}

\begin{document}
\maketitle

\begin{abstract}
This paper is dedicated to the global existence of entropy weak solutions for the Poisson-Nernst-Planck-Compressible Navier-Stokes system in a periodic domain ${\Pi^d}$ when  the shear viscosity $\mu(\rho) = \mu\, \rho$ with $\mu$ to be constant and $\lambda(\rho) = 0$ assuming a pressure state law singular close to vacuum and a $\gamma$ power elsewhere with $\gamma >1$. It is  important to recall that recently D. Marroquin and D. Wang  (Arxiv 2024) have proved global existence of weak  solutions in the spirit of P.--L. Lions and E. Feireisl when the shear and bulk viscosities are assumed to be constant without  singular pressure part close to vacuum namely $p(\rho) = a \rho^\gamma$ having in hand (from the energy estimate) $u\in L^2(0,T;H^1({\Pi^d}))$. Here the main difficulty is double first because of  the possible degeneracy of the shear viscosity we have  to  derive a formal mathematical entropy generalizing  the BD entropy equality:  Up to the authors knowledge this is completely new.  With this energy equality and the generalization of the BD entropy,  it is then possible to prove global existence of  entropy weak solutions for the system.  An extra difficulty from a physical point of view remains namely if the density vanishes the compressible Navier-Stokes equations  provides no information on the velocity field but the velocity itself is required  in the equation related  to  the concentration. This explain why a singular  pressure state law is needed close to vacuum in the spirit of what has been done in different papers see for instance D. Bresch, B. Desjardins [JMPA 2007], E. Zatorska [J. Diff. Eqs 2012],  D. Bresch, B.~Desjardins and E. Zatorska [JMPA 2015] and references cited therein. 
   \end{abstract} 
   
\medskip

\noindent {\bf Keywords.} Poisson-Nernst-Planck-Navier-Stokes, compressible flows, entropy weak solutions, weak sequential stability, density dependent viscosity, vacuum, singular pressure close to zero.

\section{Introduction and Modeling}
Let us  present the  Poisson-Nernst-Planck-Compressible Navier--Stokes system (PNPCNS) which models the transport of charged particles under the influence of an electrostatic potential in a compressible fluid with shear viscosity $\mu(\rho) = \mu \rho$ and bulk viscosity $\lambda(\rho)=0$ with $\mu>0$ constant. More precisely, we will consider the following system: 
\begin{EqSystem}
    \partial_t \rho + \D(\rho u) = 0, \label{masse}  \\
    \partial_t (\rho u) + \D(\rho u \otimes u) + \nabla p(\rho) -  2 \mu  \D(\rho \text{D}(u)) = \varepsilon \Delta \psi \nabla\psi - \nabla (c^+ + c^-),  \label{momentum}\\
    \partial_t c^+ + \D(c^+ u) = \D(A^+ \nabla c^+) + \D(A^+ e c^+ \nabla \psi), \label{cplus} \\
    \partial_t c^- + \D(c^- u) = \D(A^- \nabla c^-) - \D (A^- e c^- \nabla \psi), \label{cmoins} \\
    - \varepsilon \Delta \psi = e(c^+ - c^-) \label{poisson} 
\end{EqSystem}
with $\mu,e,A^+,A^-,\varepsilon$ strictly positive given constants and
where $\rho$, $u$ and $p = p(\rho)$  denote the density, the velocity field and the pressure of the fluid. We denote $\text{D}(u)$ the usual symmetric tensor $\text{D}(u) =({\nabla u + {}^T \nabla u})/{2}$ and we focus on the pressure state law $p'(\rho) = c_1 \rho^{\gamma-1}$ for $\rho>1$ with $\gamma > 1$ and $p'(\rho) = c_2 \rho^{-4k-1}$ for $\rho \le 1$ with $k>1$ and $c_1$, $c_2>0$ two given constants. Moreover the positive quantities  $c^+$ and  $c^-$ are respectively the concentrations of cations and anions and $\psi$ is the electrostatic potential.  The system is also endowed with initial boundary conditions namely
\begin{equation}\label{initial} 
\rho\vert_{t=0} = \rho_0 \ge 0, \qquad (\rho u)\vert_{t=0} = m_0, 
   \qquad c^+\vert_{t=0} = c^+_0\ge 0,    \qquad  c^-\vert_{t=0} = c^-_0\ge 0.
\end{equation} 
We will consider a periodic domain $\Omega = \Pi^d$ with $d=3$ (Note that the result holds for $d=2$). As mentioned in \cite{BrDeZa}, from the mathematical point of view, a  singular pressure close to vacuum  has some stabilizing properties. Such singular pressure has been introduced in \cite{BrDe2} and has been  used in \cite{MuPoZa} to investigate the model of compressible mixture. Steady compressible Navier--Stokes system with pressure singular at vacuum was also investigated by \cite{La}. For studies on doubly singular pressure in the context of mixtures we refer to paper by \cite{FeLuMa}. See also the work \cite{KiLaNi}  where they study lubrication equations in the presence of strong slippage.

The Poisson-Nernst–Planck system is a simplified continuum model for a wide variety of chemical, physical and biological applications. It is used to describe the ionic transport and electrostatic interactions. This model couples the Poisson equation for the electrostatic potential with the Nernst-Planck equations that describe the fluxes of ionic species under the influence of both
electrical and concentration gradients. Such system may be coupled 
to Navier-Stokes or Stokes systems if we are interested by the dynamic of a viscous conductive fluid with the crowded charged particles.

As described in \cite{MaWa}, there is a lot of literature on the incompressible version of the PNPNS equations (see \cite{CoIg} and references cited therein) but regarding the compressible case of the system, the literature is more limited, mostly focusing on smooth solutions in the whole space, which are either local or small; see the works in
\cite{ShWa}, \cite{WaLiTa}, \cite{WaLiTa1} and \cite{ToTa}. In this paper we investigate the global weak solutions to the initial-boundary value problem of the compressible PNPNS system with density dependent viscosities in a periodic domain. This is some sense extend to this case the nice paper by \cite{MaWa} which was dedicated to the constant viscosities case. Note that this case is completely different from the mathematical point of view: The constant viscosities case  corresponds to the framework defined in \cite{Li},
\cite{FeNoPe}, \cite{Fe}. See also \cite{BrJa} if interested by non-monotone pressure laws or constant anisotropic viscous coefficients. The density dependent case corresponds to the framework initiated in \cite{BrDe1}, \cite{BrDe2} with a full mathematical understanding with \cite{VaYu}, \cite{BrVaYu}, \cite{LiXi}. 

\medskip

In this paper, we first start with Section 2 to precise the definition of the entropy weak solutions for the system and the main result. Then in Section 3, we recall how to get the formal energy equality and then we provide a non-trivial extension of the BD entropy equality for the Poisson--Nernst--Planck compressible Navier--Stokes equations with the density dependent viscosities
$\mu(\rho) = \mu\, \rho$ and $\lambda(\rho)=0$. In Section 4, we present the approximate system for which we may construct regularized solutions. Note that we explain the method of construction without proof since it follows a standard method. In Section 5, we explain how to let the regularized parameters tend to zero to prove the main result announced in Section 2. Having in hand the Energy estimates coupled with the BD entropy generalization is the cornerstone.

\section{Definition and main result} We consider problem \eqref{masse}--\eqref{poisson} posed on a periodic domain  and assume that the initial data satisfy the following conditions
\begin{align}
& \rho_0 \, e(\rho_0) \in L^1(\Pi^d), \qquad \nabla \sqrt\rho_0 \in L^2(\Pi^d) \qquad  \hbox{ with } \rho_0 \ge 0
\label{init1}  \\
& m_0 \in L^1(\Pi^d) \hbox{ with } m_0 = 0 \hbox{ if } \rho_0 = 0 \hbox{ and } 
    |m_0|^2/\rho_0 \in L^1(\Pi^d ) 
    \label{init2} \\
&  c^\pm_0 \ge 0, \qquad c^\pm_0 \in L^1(\Pi^d), \qquad \sigma(c^\pm_0) \in L^1(\Pi^d), \qquad \rho_0\sigma(c^\pm_0/\rho_0) 
  \in L^1(\Pi^d).
\label{init3} 
\end{align} 
where $\sigma(s)=s\ln s - s +1$ and  $\rho^2 de/d\rho=p(\rho)$ where we choose the following pressure state law defined by 
\begin{align}{\label{pressure}}
p(\rho)
& = - c_1 \rho^{-4k} \hbox{ for } \rho \le 1 \nonumber \\
& = c_2 \rho^{\gamma} \hbox{ for } \rho > 1 
\end{align}
with $c_1,c_2>0$ given with $\gamma>1$ and where the constant $k\ge 1$ (note that $e$ is piecewise defined with a constant reference state $1$). We say that $(\rho,u, c^+,c^-,\psi)$ is a  $\kappa$ entropy weak solution of System 
\eqref{masse}--\eqref{poisson} with \eqref{pressure}  with the initial conditions \eqref{initial} (assuming \eqref{init1}--\eqref{init3}) for $t\in [0,T]$ with $T>0$ a given constant and $x\in \Pi^d$ (periodic boundary condition) if
\begin{itemize}
\item The density $\rho$ is nonnegative and 
$$\rho \in {\mathcal C}([0,T]; L^\gamma (\Pi^d)) \hbox{ weak }, \qquad
    \sqrt\rho \in L^\infty((0,T); H^1( \Pi^d))\cap L^\infty(0,T;L^\gamma(\Pi^d)), \qquad 
$$
with $\rho\vert_{t=0} = \rho_0$.
\item The velocity field $u$ satisfies
$$\sqrt \rho \nabla u \in L^2((0,T)\times \Pi^d), \qquad
 \rho u \otimes u \in L^\infty(0,T; \L^1(\Pi^d)), \qquad 
 \rho u \in {\mathcal C}([0,T]; L^{2\gamma/(\gamma+1)}(\Pi^d) \hbox{ weak})
$$
with $(\rho u)\vert_{t=0} = m_0$ and the following bound
$$u \in L^p(0,T;W^{1,q}(\Pi^d))$$
with $1/p=1/2+1/8k$ and $1/q=1/2+1/24k$.
\item The ion densities $c^+$ and $c^-$ are nonnegative  and
$$c^\pm \in L^\infty(0,T;L^1(\Pi^d))\cap L^1(0,T;W^{1,3/2}(\Pi^d)), \qquad
  \nabla  \sqrt c^\pm \in L^2((0,T)\times\Pi^d).
$$
\item The electrostatic potential $\psi$ satisfies 
$$\psi \in L^\infty(0,T;H^1(\Pi^d))\cap L^1((0,T);W^{3,3/2}(\Pi^d))\cap {\mathcal C}([0,T;] L^p(\Pi^d) \hbox{ weak }) \hbox{ for }   p \in [1,6)
$$
\item The continuity and momentum equations are satisfies in the distributions sense.
\item The equations on $c^+$ and $c^-$ and the initial condition are satisfied  in the distribution sense .
\item The function $\psi$ is a strong solution of the electrostatic equation .
\item The solution satisfies the following energy inequality for all $t\in [0,T]$
\begin{align}{\label{energy1}}
 E_1(t) &+ \int_0^t \int_{\Pi^d}  A^+ c^+ |\nabla(\ln c^+ + e \psi)|^2 dx + \int_0^t \int_{\Pi^d} A^- c^- |\nabla(\ln c^- - e \psi)|^2  dx  \nonumber \\
 & +2 \int_0^t \int_{\Pi^d}  \mu\, \rho |D(u)|^2 \, dx
\leq E_1(0) 
\end{align}  
with  
\begin{equation}{\label{energy2}}
E_1(t) = \int_{\Pi^d} \left( \rho \left( \frac{1}{2} |u|^2 + e(\rho) \right) + \sigma (c^+) + \sigma (c^-) + \frac{\varepsilon}{2} |\nabla \psi|^2 \right) dx. 
\end{equation} 
where $\sigma (c)= c \ln c -c + 1$ and $\rho^2 de(\rho)/d\rho =p(\rho).$
\item The solutions satisfies also the following BD entropy inequality for all $t\in [0,T]$
\begin{align}{\label{BD entropy final}}
  E_2(t) 
    &+ 8 \mu \int_0^t \int_{\Pi^d} |\nabla \sqrt{c^+}|^2 + 8 \mu \int_0^t \int_{\Pi^d} |\nabla \sqrt{c^-}|^2 + 2 \mu \int_0^t \int_{\Pi^d} \varepsilon |\Delta \psi|^2  \nonumber \\
    & + 2 \int_0^t\int_{\Pi^d} \mu\, \rho |A(u)|^2  
     + 2 \mu \int_0^t\int_{\Pi^d} \frac{p'(\rho) }{\rho} |\nabla \rho|^2
     \\
     &+ \int_0^t\int_{\Pi^d} A^+ c^+ |\nabla (\ln c^+ + e \psi)|^2 + \int_0^t\int_{\Pi^d} A^- c^- |\nabla (\ln c^- - e \psi|^2 
    \le   E_2(0) \nonumber
\end{align}
where
\begin{align}{\label{Entrop} }
&  E_2 (t) =    \frac{1}{2}   \int_{\Pi^d} \rho |u + 2\mu \nabla \ln\rho|^2
     +  \int_{\Pi^d} \rho e(\rho) +  \int_{\Pi^d} \varepsilon \frac{| \nabla \psi|^2}{2} \\
    & 
    \hskip1cm +  \int_{\Pi^d} \left( \sigma (c^+)
   + \sigma (c^-) \right) \nonumber 
 + 2\mu\, \int_{\Pi^d} \left( \frac{1}{A^+}\rho
    \sigma(c^+/\rho) 
     + \frac{1}{A^-} \rho \sigma(c^-/\rho) \right) 
\end{align}
\end{itemize}
with $\sigma (c)= c \ln c -c + 1$ and $\rho^2 de(\rho)/ d \rho =p(\rho)$.

\begin{Remark} This is important to observe that for our density dependent case, we have a new mathematical entropy generalizing perfectly the BD entropy. In the constant viscosities case, an estimate exists {\rm (see \cite{MaWa}: Estimate (1.16) in \rm Theorem 1)} but not such a kind of precise formal equality.  This is a nice property for instance to propose a relative entropy based on the energy and the BD entropy. This is really helpful to precise singular limit such as the electro-neutral asymptotic: See for instance the forthcoming paper {\rm \cite{To}}. This may be also used to design appropriate numerical schemes.
\end{Remark}

\bigskip

\begin{theorem} {\rm (Existence of global entropy weak solutions)}. Let $\gamma>1$ and let the  initial data satisfy 
\eqref{init1}--\eqref{init3}.
Then, for any $T>0$, there exists a finite entropy weak solution of  \eqref{masse}--\eqref{poisson} with initial conditions \eqref{initial} and the pressure state law \eqref{pressure} in the sense of the definition given just above.
\end{theorem}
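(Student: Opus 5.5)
The plan has four stages: derive the two a priori identities formally, deduce uniform bounds from them, construct regularized solutions, and pass to the limit using compactness.

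\textbf{Step 1: energy equality.} I multiply \eqref{momentum} by $u$ and \eqref{cplus}, \eqref{cmoins} by $\ln c^\pm \pm e\psi$. Using \eqref{poisson}, the coupling terms cancel because
$$\varepsilon\Delta\psi\nabla\psi\cdot u-\nabla(c^++c^-)\cdot u = -e(c^+-c^-)\nabla\psi\cdot u-\nabla(c^++c^-)\cdot u,$$
and this matches the transport parts $\mathrm{div}(c^\pm u)(\ln c^\pm\pm e\psi)$. The term $\partial_t(c^+-c^-)e\psi$ produces $\frac{d}{dt}\frac{\varepsilon}{2}\int|\nabla\psi|^2$. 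This gives \eqref{energy1} with equality for smooth solutions.

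\textbf{Step 2: BD entropy.} For the generalized BD entropy I set $w=u+2\mu\nabla\ln\rho$. Since $\partial_t(\rho\nabla\ln\rho)+\mathrm{div}(\rho u\otimes\nabla\ln\rho)=-\mathrm{div}(\rho\,{}^T\nabla u)$, the pair $(\rho,\rho w)$ solves the momentum equation with viscous term $-2\mu\,\mathrm{div}(\rho A(u))$. Testing with $w$ then reproduces the energy structure. The new terms are the forcing tested against $2\mu\nabla\ln\rho$:
$$2\mu\int\bigl(-e(c^+-c^-)\nabla\psi-\nabla(c^++c^-)\bigr)\cdot\frac{\nabla\rho}{\rho}.$$
They are not signed. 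The idea is to compensate them by the time derivative of $\frac{2\mu}{A^\pm}\int\rho\sigma(c^\pm/\rho)$. Writing $c^\pm=\rho\theta^\pm$, the relative concentrations satisfy
$$\rho(\partial_t\theta^\pm+u\cdot\nabla\theta^\pm)=\mathrm{div}\bigl(A^\pm(\nabla c^\pm\pm ec^\pm\nabla\psi)\bigr).$$
Multiplying by $\ln\theta^\pm=\ln c^\pm-\ln\rho$ gives the dissipation $A^\pm c^\pm\nabla(\ln c^\pm\pm e\psi)\cdot\nabla(\ln c^\pm-\ln\rho)$. After dividing by $A^\pm$ and multiplying by $2\mu$, the $\nabla\ln\rho$ part exactly cancels the bad terms above. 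What remains is $2\mu\int c^\pm\nabla(\ln c^\pm\pm e\psi)\cdot\nabla\ln c^\pm$. Expanding it yields $8\mu\int|\nabla\sqrt{c^\pm}|^2$ plus the cross terms $\pm 2\mu e\int\nabla c^\pm\cdot\nabla\psi$. These sum to $2\mu e\int\nabla(c^+-c^-)\cdot\nabla\psi=2\mu\varepsilon\int|\Delta\psi|^2$ by \eqref{poisson}. Adding the energy equality gives \eqref{BD entropy final}. I would verify this algebra carefully, since it is the genuinely new identity.

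\textbf{Step 3: uniform bounds.} From the two inequalities I obtain:
\begin{itemize}
\item $\sqrt\rho\in L^\infty H^1$;
\item $\nabla\rho^{\gamma/2},\nabla\rho^{-2k}\in L^2L^2$, hence $\rho^{-1}\in L^{4k}(0,T;L^{12k})$ by Sobolev embedding, since $\rho^{-2k}\in L^2H^1$ and $\int\rho^{-4k}\le C$ from $\int\rho e(\rho)$;
\item $\sqrt\rho\nabla u\in L^2$.
\end{itemize}
Writing $\nabla u=\rho^{-1/2}\sqrt\rho\nabla u$ then gives the stated $u\in L^pW^{1,q}$ via H\"older. The $c^\pm$ bounds follow from $\nabla\sqrt{c^\pm}\in L^2$ and $c^\pm\in L^\infty L^1$, giving $c^\pm\in L^1W^{1,3/2}$ and $L^{3}$-type integrability. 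The bound $\psi\in L^1W^{3,3/2}$ then follows by elliptic regularity.

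\textbf{Step 4: approximation and compactness.} I regularize in the manner of Bresch--Desjardins--Zatorska: add drag terms, capillarity $\kappa\rho\nabla\Delta\sqrt\rho/\sqrt\rho$-type terms, and a Galerkin scheme for $u$. These regularizations must preserve both the energy and the BD structure of Step 2; Faedo--Galerkin then gives approximate solutions satisfying uniform versions of the bounds. To pass to the limit, I would use the following:
\begin{itemize}
\item Aubin--Lions for $\sqrt\rho$ in $C(L^2)$ strongly, so $\rho\to\rho$ strongly and $\rho^{-1}$ converges a.e.;
\item Aubin--Lions for $\rho u$, so $\rho u\to\rho u$ strongly in $L^2L^{r}$;
\item the Mellet--Vasseur argument is unnecessary, because $u$ itself is bounded in $L^pW^{1,q}$ thanks to the singular pressure, and $\rho^{-1}$ is controlled; hence $\sqrt\rho u\to\sqrt\rho u$ strongly in $L^2$ and $\rho u\otimes u$ converges;
\item for $c^\pm$, Aubin--Lions using $\nabla\sqrt{c^\pm}$ and the time derivative bound from the equation, giving strong convergence, with the products $c^\pm u$ and $c^\pm\nabla\psi$ passing to the limit;
\item the viscous term written as $\mathrm{div}(\rho\nabla u)=\mathrm{div}(\sqrt\rho\,\sqrt\rho\nabla u)$, which is a weak--strong product.
\end{itemize}
The inequalities \eqref{energy1} and \eqref{BD entropy final} pass to the limit by weak lower semicontinuity.

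I expect the main obstacle to be twofold. First, the BD computation of Step 2, namely establishing the exact cancellation through the relative entropy $\rho\sigma(c/\rho)$. Second, designing an approximate system that keeps this cancellation, since regularizing the $c^\pm$ equations (e.g.\ by extra diffusion) could break it.
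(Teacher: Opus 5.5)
Your plan follows the same architecture as the paper: formal energy identity, generalized BD identity, uniform bounds driven by the singular pressure, approximation, and weak stability. Steps 1 and 3 coincide with the paper. The real differences are in Step 2 and in the compactness of $c^\pm_n$; the genuine gap is in Step 4.

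\textbf{Step 2.} You write $c^\pm=\rho\theta^\pm$ and test the transport equation for $\theta^\pm$ with $\ln\theta^\pm$. This cancels the $\nabla\ln\rho$ terms in one stroke and shows why $\rho\sigma(c^\pm/\rho)$ is the right corrector. The paper proceeds in several steps.
\begin{itemize}
\item It tests the Nernst--Planck equations (divided by $A^\pm$) with $\mu\ln\rho$.
\item It adds $\frac{\mu c^\pm}{A^\pm}$ times $\partial_t\ln\rho+u\cdot\nabla\ln\rho+\mathrm{div}\,u=0$.
\item It tests again with $\mu\ln c^\pm$; the terms $\int\frac{\mu}{A^\pm}c^\pm\mathrm{div}\,u$ then cancel.
\item Only at the end does it recombine via $\sigma(c)-c\ln\rho=\rho\sigma(c/\rho)+1-\rho$ and $\int\rho=\mathrm{const}$.
\end{itemize}
Both routes give the same identity. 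One caution: what you add at the end should be only the Nernst--Planck/Poisson part of the energy computation (the forcing tested against $u$). Adding the full $E_1$ would give $E_1+E_2$, not $E_2$.

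\textbf{Compactness.} For $c^\pm_n$ the paper redoes the Marroquin--Wang computation of $\partial_t\sqrt{1+c^\pm_n}$. Your direct Aubin--Lions--Simon argument is also valid. Indeed, $c_n^\pm\in L^{1/\theta}L^{3/(3-2\theta)}$ against $u_n\in L^pL^{q^\star}$ bounds $c_n^\pm u_n$ in $L^1L^1$ whenever $(4k+1)/(16k)\le\theta\le(4k-1)/(8k)$. This range is nonempty, with interior, for $k\ge1$, so the product also passes to the limit. Your handling of $\rho_nu_n\otimes u_n$ without Mellet--Vasseur is fine. However, the word ``hence'' skips two steps: a.e.\ convergence of $u_n=m_n/\rho_n$, and equi-integrability of $\rho_n|u_n|^2$ (true, e.g.\ from $\rho_n\in L^\infty L^3$ and $q^\star>3$).

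\textbf{Step 4 (the gap).} You leave the approximation open, and the claim that ``Faedo--Galerkin then gives approximate solutions satisfying uniform versions of the bounds'' fails for the BD part.
\begin{itemize}
\item $\nabla\ln\rho$ is not an admissible test function in a Galerkin momentum equation.
\item A Bohm-type capillarity does not by itself give a lower bound on $\rho$. So the manipulations with $\ln\rho$ and $\ln(c^\pm/\rho)$ remain unjustified.
\end{itemize}
The paper's scheme is built around this problem and answers your worry about breaking the cancellation.
\begin{itemize}
\item It leaves \eqref{eq 3 PNPNS}--\eqref{eq 4 PNPNS} untouched and regularizes only the Poisson equation, via $(1-\zeta\Delta)\Psi=e(c^+-c^-)$.
\item It regularizes the fluid part \`a la Mucha--Pokorn\'y--Zatorska: $\xi\Delta\rho$ compensated by $\xi\nabla\rho\cdot\nabla u$, together with $\eta\Delta^2u$ and $\delta\rho\nabla\Delta^{2s+1}\rho$.
\item With $2s+1\ge4$, this high-order term and the uniform bound on $\rho^{-2k}$ give $\|\rho^{-1}\|_{L^\infty}\le c(\delta)$.
\item The BD estimate \eqref{BDentropiereg} is asserted for the regularized PDE after the Galerkin/fixed-point step, where $u\in L^2W^{2,2}$ and $\rho$ is smooth and positive, not at the Galerkin level.
\item The limits are then taken in the order $\zeta\to0$, then $\eta,\xi\to0$, then $\delta\to0$.
\end{itemize}
The paper only sketches this stage as well. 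Still, to close your argument you need this ordering (BD estimate after the Galerkin limit, on a non-degenerate regularized system), or a genuinely augmented formulation in the spirit of Bresch--Desjardins--Zatorska. A Galerkin scheme for $u$ alone does not suffice.
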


We prove the existence of global entropy weak solutions as a limit of solutions of a regularized system of equations by combining the theory for the Navier-Stokes equations with density dependent viscosities  with  results regarding the Poisson-Nernst-Planck (PNP)  subsystem  following in some sense some arguments that may be found in \cite{MaWa} for this part. The {\it a priori}estimates coming from the energy coupled strongly with the {\it apriori} estimates coming from the generalization of the BD entropy  allow to perform the  nonlinear weak stability process for the Poisson-Nernst-Planck density dependent compressible Navier-Stokes system.

\section{Some important equalities at formal level} 

\subsection{Energy equality at formal level}
Let us provide a formal energy  equality for the system.  This equality may be proved in similar manner than the one for the Poisson-Nernst-Planck-Compressible Navier-Stokes system for constant viscosities as derived for instance by D. Marroquin and D. Wang: See  \cite{MaWa}.  We will recall the derivation for reader's convenience because it is not straightforward at the first time.
\begin{prop}
    Let $(u,\rho,c^\pm,\psi)$ a smooth solution of \eqref{masse}--\eqref{poisson} with initial conditions \eqref{initial} then it satisfies the following energy  equality 
    \begin{equation}\label{eq énergie}
    \frac{d}{dt} E_1(t) + \int_{\Pi^d} \left( 2 \mu\, \rho  |D(u)|^2  + A^+ c^+ |\nabla (\ln c^+ + e \psi)|^2 + A^-c^- |\nabla (\ln c^- - e \psi|^2 \right) = 0
    \end{equation}
    with $$E_1(t) = \int_{\Pi^d} \left( \rho \left( \frac{1}{2} |u|^2 + e(\rho) \right) + \sigma (c^+) + \sigma(c^-) + \frac{\varepsilon}{2}|\nabla \psi|^2  \right)$$
    and $\sigma(c) = c \ln c - c + 1$.
\end{prop}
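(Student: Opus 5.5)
We multiply each equation by the natural conjugate variable, integrate over $\Pi^d$, and add. Periodicity kills all boundary terms. The only delicate point is making the electrostatic coupling terms cancel exactly.

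\emph{Fluid part.} We take the inner product of \eqref{momentum} with $u$ and use \eqref{masse}. The standard computation gives
$$\frac{d}{dt}\int \rho\Big(\tfrac12|u|^2+e(\rho)\Big) + 2\mu\int \rho|D(u)|^2 = \int \varepsilon\Delta\psi\,\nabla\psi\cdot u - \int \nabla(c^++c^-)\cdot u .$$
Three ingredients are used:
\begin{itemize}
\item the transport structure $\rho(\partial_t u + u\cdot\nabla u)\cdot u = \rho\,\frac{D}{Dt}\frac{|u|^2}{2}$;
\item the identity $\int\nabla p(\rho)\cdot u=\frac{d}{dt}\int\rho e(\rho)$, obtained from $\rho^2e'=p$ and \eqref{masse} via $\partial_t(\rho e)+\D(\rho e u)+p\,\D u=0$;
\item the symmetry of $D(u)$, which gives $-2\mu\int\D(\rho D(u))\cdot u=2\mu\int\rho|D(u)|^2$.
\end{itemize}

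\emph{Ion part.} We multiply \eqref{cplus} by $\ln c^+ + e\psi$ and \eqref{cmoins} by $\ln c^- - e\psi$, then integrate.

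The right-hand sides are $\D(A^\pm c^\pm\nabla(\ln c^\pm \pm e\psi))$. After integration by parts they yield exactly $-\int A^\pm c^\pm|\nabla(\ln c^\pm\pm e\psi)|^2$.

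The $\ln c^\pm$ pieces of the left-hand sides give $\frac{d}{dt}\int\sigma(c^\pm)$, since $\sigma'(c)=\ln c$. They also give the transport term $\int \D(c^\pm u)\ln c^\pm = -\int u\cdot\nabla c^\pm$, using $c\nabla\ln c=\nabla c$. Moved to the right of the ion identity, this becomes $+\int u\cdot\nabla(c^++c^-)$, which cancels the $-\int\nabla(c^++c^-)\cdot u$ term from the momentum balance.

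The $\pm e\psi$ pieces give
$$e\int\psi\,\partial_t(c^+-c^-) + e\int\psi\,\D((c^+-c^-)u).$$

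\emph{Electrostatic coupling.} This is the step needing care, and \eqref{poisson} handles both remaining terms.

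First, $e\,\partial_t(c^+-c^-)=-\varepsilon\Delta\partial_t\psi$, so
$$e\int\psi\,\partial_t(c^+-c^-)=\varepsilon\int\nabla\psi\cdot\nabla\partial_t\psi=\frac{d}{dt}\frac{\varepsilon}{2}\int|\nabla\psi|^2 .$$

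Second,
$$e\int\psi\,\D((c^+-c^-)u)=-\int e(c^+-c^-)\,u\cdot\nabla\psi=\varepsilon\int\Delta\psi\,u\cdot\nabla\psi .$$
Moved to the right, it appears as $-\varepsilon\int\Delta\psi\,\nabla\psi\cdot u$ and cancels the Korteweg-type force $\int\varepsilon\Delta\psi\nabla\psi\cdot u$ from the momentum balance.

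Summing the fluid and ion identities then yields \eqref{eq énergie}. Smoothness, together with $\rho,c^\pm>0$, justifies all the manipulations, including division by $c^\pm$ and $\ln c^\pm$.

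I expect the main obstacle to be bookkeeping the signs in this last cancellation. It works only because the choice of multipliers $\ln c^\pm \pm e\psi$ matches the drift signs in \eqref{cplus}–\eqref{cmoins} and the sign of the Poisson equation.
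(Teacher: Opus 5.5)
Your proof is correct and follows the paper's route: you test the momentum equation with $u$ and the Nernst--Planck equations with $\ln c^\pm \pm e\psi$ (the paper writes these as $\sigma'_\pm(c^\pm)\pm e\psi$). You then use the Poisson equation both to produce $\frac{d}{dt}\frac{\varepsilon}{2}\int|\nabla\psi|^2$ and to cancel the force $\varepsilon\Delta\psi\nabla\psi\cdot u$ against $e(c^+-c^-)\nabla\psi\cdot u$, exactly as the paper does, except that the paper works in pointwise divergence form before integrating whereas you integrate at once, which keeps the sign bookkeeping cleaner.
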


\begin{proof}  \noindent {\bf The compressible Navier-Stokes contribution:} 
Let us take the scalar product of the momentum equation with $u$ and use the mass equation to write 
\begin{align*}   
u \cdot \( \partial_t (\rho  u) + \D (\rho u \otimes u) \right)
    & = u \cdot \partial_t (\rho u) + \left( (\rho u \cdot \nabla ) u \right) \cdot u + \D (\rho u) \cdot u^2 \\ 
    &= u \cdot \left( \rho \partial_t u + u \partial_t \rho \right) + (\rho u \cdot \nabla) u^2 + \D (\rho u) \cdot u^2 \\
    & = \rho \partial_t \frac{|u|^2}{2} + \left( (\rho u \cdot \nabla) u \right) \cdot u \\
        & = \partial_t \left( \rho \frac{|u|^2}{2} \right) - \frac{1}{2} |u|^2 \partial_t \rho + \D (\rho u \frac{1}{2} |u|^2) 
    - \frac{1}{2}|u|^2 \D(\rho u) \\
    &= \partial_t \left( \rho \frac{|u|^2}{2}  \right) + \D(u \rho \frac{1}{2} |u|^2).
\end{align*}

Let us recall that formally
\begin{align*}
    \partial_t \pi(\rho) + \D(u \pi(\rho)) + p(\rho) \D u = 0.
\end{align*}
choosing $\pi$ such that $\pi'(\rho) \rho - \pi(\rho) = p(\rho)$. This is equivalent to write $$\pi(\rho) = \rho \left( \int^\rho \frac{p(s)}{s^2} ds \right) := \rho e(\rho).$$
Moreover  $$u \cdot \nabla p = \D (u p(\rho)) - p \D(u).$$
In that way, summing we get
\begin{align*}
   & \partial_t \pi(\rho) + \D(u \pi(\rho)) + \D(u p(\rho)) - u\cdot \nabla p(\rho) \\
    &= \partial_t (\rho e(\rho)) + \D(u \rho e(\rho)) + \D(u p(\rho)) - u\cdot \nabla p(\rho).
\end{align*}
therefore $$u \cdot \left[ \partial_t (\rho u) + \D(\rho u \otimes u) + \nabla p(\rho) \right] = \partial_t \left( \rho ( \frac{1}{2} |u|^2 + e(\rho)) \right) + \D \left( u (\rho (\frac{1}{2} |u|^2 + e(\rho)) + p(\rho)) \right)$$ with  $e(\rho) = \int^\rho \frac{p(s)}{s^2} ds$. 
Then it comes 
\begin{align*}
    \partial_t \left( \rho ( \frac{1}{2} |u|^2 + e(\rho)) \right) & + \D \left( u (\rho (\frac{1}{2} |u|^2 + e(\rho)) + p(\rho)) \right) \\ 
    &= u \cdot \left[ \D (2 \mu \, \rho  D(u))  \right] + u \cdot \left[\varepsilon \Delta \psi \nabla \psi \right] - u \cdot \left[\nabla c^+ + \nabla c^- \right].
\end{align*} 
Therefore integrating in space and by parts, we get 
\begin{align*}
    \partial_t \int_{\Pi^d}  \left( \rho ( \frac{1}{2} |u|^2 + e(\rho)) \right) & +  2 \mu \int_{\Pi^d} \rho |D(u)|^2 \\ 
    &= \int_{\Pi^d} u \cdot \left[\varepsilon \Delta \psi \nabla \psi \right] - \int_{\Pi^d}  u \cdot \left[\nabla c^+ + \nabla c^- \right].
\end{align*}

\bigskip
\noindent {\bf The Poisson-Nernst-Planck contribution:} 
We define the functions  $\sigma_j$ such that $s \sigma_j '(s) - \sigma_j (s) = \varphi_j (s)$ (where $\varphi^+ (c^+) = c^+$ and $\varphi^- (c^-) = c^-$). 
Let us multiply the equation on $c^+$ by $\sigma^{'}_+ (c^+) + e \psi$, we get : 
$$\partial_t c^+ (\sigma^{'}_+ (c^+) + e \psi) + \D(c^+ u)(\sigma^{'}_+ (c^+) + e \psi) = \D(A^+ \nabla c^+ + A^+ e c^+ \nabla \psi) (\sigma^{'}_+ (c^+) + e \psi)$$
which gives, 
$$\partial_t \sigma_+ (c^+) + e\partial_t c^+ \psi = - \D(c^+ u)(\sigma^{'}_+ (c^+) + e \psi) + \D(A^+ \nabla \varphi^+ (c^+) + A^+ e c^+ \nabla \psi)(\sigma^{'}_+ (c^+) + e \psi) := \alpha + \beta$$
Let us remark that $\nabla \varphi^+(c^+) = \nabla \left( \sigma^{'}_+ (c^+) c^+ - \sigma_+ (c^+) \right) = c^+ \nabla \sigma^{'}_+ (c^+)$, thus
\begin{align*}
    \beta &= \D(A^+ \nabla \varphi^+ (c^+) + A^+ e c^+ \nabla \psi)(\sigma^{'}_+ (c^+) + e \psi) \\ 
    &= \D \left(A^+ c^+ \nabla(\sigma^{'}_+ (c^+) + e \psi)(\sigma^{'}_+ (c^+) + e \psi)\right) - A^+ c^+ |\nabla(\sigma^{'}_+ (c^+) + e \psi)|^2 
\end{align*}
and 
\begin{align*}
    \alpha &= - \D(c^+ u)(\sigma^{'}_+ (c^+) + e \psi) \\
    &= - \D \left( c^+ (\sigma^{'}_+ (c^+) + e \psi)u \right) + c^+ \nabla(\sigma^{'}_+ (c^+) + e \psi) \cdot u \\ 
    &= - \D \left(c^+ (\sigma^{'}_+ (c^+) + e \psi)u \right) + \nabla \varphi^+ (c^+) \cdot u + ec^+ \nabla \psi \cdot u.
\end{align*}
We then get 
\begin{align}{\label{sigma c+}}
    \partial_t \sigma_+(c^+) + e \partial_t c^+ \psi & = - \D \left( c^+ (\sigma^{'}_+ (c^+) + e \psi)u \right) + \nabla \varphi^+ (c^+) \cdot u + ec^+\nabla \psi \cdot u \\
    & + \D \left(A^+ c^+ \nabla(\sigma^{'}_+ (c^+) + e \psi)(\sigma^{'}_+ (c^+) + e \psi)\right) - A^+ c^+ |\nabla(\sigma^{'}_+ (c^+) + e \psi)|^2. 
\end{align}
In the same manner, we multiply the equation on   $c^-$ by $\sigma^{'}_- (c^-) - e \psi$, we get: 
\begin{align}
    \partial_t \sigma_-(c^-) - e \partial_t c^- \psi &= - \D \left( c^- (\sigma^{'}_- (c^-) - e \psi)u \right) + \nabla \varphi^- (c^+) \cdot u - ec^-\nabla \psi \cdot u {\label{sigma c-}} \\
    &+ \D \left(A^- c^- \nabla(\sigma^{'}_- (c^-) - e \psi)(\sigma^{'}_- (c^-) - e \psi)\right) - A^- c^- |\nabla(\sigma^{'}_- (c^-) - e \psi)|^2. 
\end{align}
Summing the equations   (\ref{sigma c+}) and (\ref{sigma c-}), we get
\begin{align*}
    \partial_t &\left( \sigma_+(c^+)  + \sigma_-(c^-) \right)  + e \partial_t (c^+- c^-) \psi + A^+c^+ |\nabla(\sigma^{'}_+ (c^+) + e \psi)|^2 + A^- c^- |\nabla(\sigma^{'}_- (c^-) - e \psi)|^2 \\ 
    &= - \D \left( c^+ (\sigma^{'}_+ (c^+) + e \psi)u \right) - \D \left( c^- (\sigma^{'}_- (c^-) - e \psi)u \right) + \left( \nabla \varphi^+ (c^+) + \nabla \varphi^- (c^-) \right) \cdot u + e(c^+ - c^-) \nabla \psi \cdot u \\ 
    &+ \D \left(A^+ c^+ \nabla(\sigma^{'}_+ (c^+) + e \psi)(\sigma^{'}_+ (c^+) + e \psi)\right) + \D \left(A^- c^- \nabla(\sigma^{'}_- (c^-) - e \psi)(\sigma^{'}_- (c^-) - e \psi)\right).
\end{align*}
Note that  $e \partial_t (c^+ - c^-) = - \varepsilon \Delta \partial_t \psi$ and $- \Delta \partial_t \psi \psi = \frac{1}{2} \partial_t (|\nabla \psi|^2) - \D(\psi \nabla \partial_t \psi) $, thus we get
\begin{align*}
    \partial_t &\left( \sigma_+ (c^+) + \sigma_- (c^-) \right) + \frac{\varepsilon}{2} \partial_t (|\nabla \psi|^2) + A^+c^+ |\nabla(\sigma^{'}_+ (c^+) + e \psi)|^2 + A^- c^- |\nabla(\sigma^{'}_- (c^-) - e \psi)|^2 \\ 
    &= \varepsilon \D(\psi \nabla \partial_t \psi) - \D \left( c^+ (\sigma^{'}_+ (c^+) + e \psi)u \right) - \D \left( c^- (\sigma^{'}_- (c^-) - e \psi)u \right) + \left( \nabla \varphi^+ (c^+) + \nabla \varphi^- (c^-) \right) \cdot u \\ 
    & - \varepsilon \Delta \psi \nabla \psi \cdot u + \D \left(A^+ c^+ \nabla(\sigma^{'}_+ (c^+) + e \psi)(\sigma^{'}_+ (c^+) + e \psi)\right) + \D \left(A^- c^- \nabla(\sigma^{'}_- (c^-) - e \psi)(\sigma^{'}_- (c^-) - e \psi)\right). 
\end{align*}
We recall that 
\begin{align*}
    \partial_t \left( \rho ( \frac{1}{2} |u|^2 + e(\rho)) \right) & + \D \left( u (\rho (\frac{1}{2} |u|^2 + e(\rho)) + p(\rho)) \right) \\ 
    &= u \cdot \D (2 \mu  \, \rho  D(u))  + u \cdot \left[\varepsilon \Delta \psi \nabla \psi \right] - u \cdot \left[\nabla c^+ + \nabla c^- \right].
\end{align*}
Thus, summing the last two equalities, we get :
\begin{align*}
    \partial_t  & \left( \rho \left( \frac{1}{2} |u|^2 + e(\rho) \right) + \sigma_+ (c^+) + \sigma_- (c^-) + \frac{\varepsilon}{2} |\nabla \psi|^2 \right) + A^+c^+ |\nabla(\sigma^{'}_+ (c^+) + e \psi)|^2 + A^- c^- |\nabla(\sigma^{'}_- (c^-) - e \psi)|^2 \\ 
    &= - \D \left( u (\rho (\frac{1}{2} |u|^2 + e(\rho)) + p(\rho)) \right) + u \cdot \D (2 \mu\, \rho D(u))  + \varepsilon \D(\psi \nabla \partial_t \psi) \\
    & - \D \left( c^+ (\sigma^{'}_+ (c^+) + e \psi)u \right)
    - \D \left( c^- (\sigma^{'}_- (c^-) - e \psi)u \right) + \D \left( A^+ (\nabla \varphi^+(c^+) + c^+ e \psi) (\sigma^{'}_+ (c^+) + e \psi) \right) \\
    &+ \D \left( A^- (\nabla \varphi^-(c^-) - c^- e \psi) (\sigma^{'}_- (c^-) - e \psi) \right).
\end{align*}
Integrating on ${\Pi^d}$, considering periodic boundary conditions, we get 
$$\frac{d}{dt} E(t) + \int_{\Pi^d} \left( A^+ c^+ |\nabla(\ln c^+ + e \psi)|^2 + A^- c^- |\nabla(\ln c^- - e \psi)|^2 \right) dx = \int_{\Pi^d}  u \cdot  \D (2 \mu \, \rho D(u))  dx $$ 
with  $$E(t) = \int_{\Pi^d} \left( \rho \left( \frac{1}{2} |u|^2 + e(\rho) \right) + \sigma_+ (c^+) + \sigma_- (c^-) + \frac{\varepsilon}{2} |\nabla \psi|^2 \right) dx. $$

\noindent Finally, integrating by parts, we have
$$\int_{\Pi^d}  u \cdot \D (2 \mu\, \rho D(u))  dx =  - 2 \int_{\Pi^d} \mu\, \rho |D(u)|^2 dx$$
and therefore the energy equality (\ref{eq énergie}).
\end{proof}

\begin{Remark}
 We could at the level of Energy estimates consider the particular case  $\D (\rho \text{D}(u))$ or the general case  $2 \D (\mu(\rho) \text{D}(u)) + \nabla (\lambda (\rho) \D(u))$. The particular case  $\D(\rho \text{D}(u))$ summarizes to choose $\mu(\rho) = \rho$ and $\lambda(\rho) = 0$.
 \end{Remark}

\subsection{Extension of the BD entropy at formal level}
We will do the calculation for $\mu(\rho) = \mu \, \rho$ and $\lambda(\rho) = 0$. We want to get an extra estimate allowing to bound derivative in space for the density $\rho$ as required for the compressible Navier-Stokes equations with density dependent viscosities and to bound derivative in space for the concentration $c^\pm$ as in papers by D. Maroquin  and D. Wang \cite{MaWa}. The main novelty here is that we really get a new mathematical entropy for Poisson-Nernst-Planck Compressible Navier-Stokes equations with a degenerate shear viscosity  that seems to be not known. Note that this BD entropy type equality does not applied in the constant viscosities case where only estimates may be obtained. 

\begin{Remark}
For the density dependent viscosity case, we really need an extension of the BD entropy which provides appropriate bounds on   $\rho$ and derivatives. Therefore, we follow the calculations made for the degenerate compressible Navier-Stokes equation  and see what would happen with the new quantities linked to $c^\pm$, $\psi$.  Magically, we will see that everything goes well and that we will get a precious supplementary estimate which will provide new estimates on the density, the velocity field and the concentrations. 
\end{Remark} 

More precisely let us prove the following proposition
\begin{prop} Let $(u,\rho,c^\pm,\psi)$ a smooth solution of \eqref{masse}--\eqref{poisson} with initial conditions \eqref{initial} then
\begin{align}{\label{BD entropy final}}
 \frac{d}{dt} E_2(t) + 
    &+ 8 \mu \int_{\Pi^d} |\nabla \sqrt{c^+}|^2 + 8 \mu \int_{\Pi^d} |\nabla \sqrt{c^-}|^2 + 2 \mu \int_{\Pi^d} \varepsilon |\Delta \psi|^2  + 2 \int_{\Pi^d} \mu\, \rho\,  |A(u)|^2 + 2\mu  \int_{\Pi^d} \frac{p'(\rho) }{\rho} |\nabla \rho|^2  \nonumber \\
    &+ \int_{\Pi^d} A^+ c^+ |\nabla (\ln c^+ + e \psi)|^2 + \int_{\Pi^d} A^- c^- |\nabla (\ln c^- - e \psi|^2 = 0 
\end{align}
where
\begin{align}{\label{BD entropy final2}}
 E_2(t) & =  \frac{1}{2} \int_{\Pi^d} \rho |u + 2\mu \nabla \log\rho|^2  +  \int_{\Pi^d} \rho e(\rho) +  \int_{\Pi^d} \varepsilon \frac{| \nabla \psi|^2}{2} \nonumber \\
    & +  \int_{\Pi^d} \left( \sigma (c^+) + \sigma (c^-) \right) 
     + 2\, \mu  \int_{\Pi^d} \left( \frac{1}{A^+}\rho \sigma( \frac{c^+}{\rho}) + \frac{1}{A^-} \rho \sigma(\frac{c^-}{\rho})  \right) \nonumber \\
\end{align}
\end{prop}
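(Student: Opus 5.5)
The plan is to combine three ingredients: the classical BD computation for the momentum equation rewritten in terms of the effective velocity $w = u + 2\mu\nabla\ln\rho$, the Poisson--Nernst--Planck part of the energy computation from the previous proposition, and a new "relative entropy" computation for the quantities $\rho\,\sigma(c^\pm/\rho)$. The point is that the extra forcing generated by $2\mu\nabla\ln\rho$ against the electric and osmotic forces cancels exactly against part of the time derivative of $\frac{2\mu}{A^\pm}\int\rho\sigma(c^\pm/\rho)$. The remainder produces the dissipations $8\mu\int|\nabla\sqrt{c^\pm}|^2$ and $2\mu\varepsilon\int|\Delta\psi|^2$.

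\textbf{Step 1 (BD part).} Since $\mu(\rho)=\mu\rho$, the mass equation gives $\partial_t(\rho\nabla\ln\rho)+\D(\rho u\otimes\nabla\ln\rho)=-\D(\rho\,{}^T\nabla u)$. Adding $2\mu$ times this to \eqref{momentum} yields
$$\partial_t(\rho w)+\D(\rho w\otimes u)+\nabla p(\rho)-2\mu\D(\rho A(u)) = \varepsilon\Delta\psi\nabla\psi-\nabla(c^++c^-),$$
where $A(u)=(\nabla u-{}^T\nabla u)/2$. I will take the scalar product with $w$ and argue as in the Navier--Stokes contribution of the energy proof.
\begin{itemize}
\item The transport terms give $\frac{d}{dt}\frac12\int\rho|w|^2$.
\item The term $-2\mu\D(\rho A(u))$ tested against $u$ gives $2\mu\int\rho|A(u)|^2$, and its part tested against $\nabla\ln\rho$ vanishes because $A(u)$ is antisymmetric.
\item The pressure gives $\frac{d}{dt}\int\rho e(\rho)+2\mu\int\frac{p'(\rho)}{\rho}|\nabla\rho|^2$.
\end{itemize}
The forcing tested against $u$ is handled exactly as in the previous proposition. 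Adding the PNP identity (the sum of \eqref{sigma c+}, \eqref{sigma c-} and the Poisson relation) produces $\frac{d}{dt}\int(\sigma(c^+)+\sigma(c^-)+\frac{\varepsilon}{2}|\nabla\psi|^2)$ and the two dissipations $A^\pm c^\pm|\nabla(\ln c^\pm\pm e\psi)|^2$. What remains is the cross term
$$2\mu\int\big(\varepsilon\Delta\psi\nabla\psi-\nabla(c^++c^-)\big)\cdot\frac{\nabla\rho}{\rho}
 =-2\mu\int\big(J^++J^-\big)\cdot\frac{\nabla\rho}{\rho},$$
where $J^\pm=\nabla c^\pm\pm e c^\pm\nabla\psi$. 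Here I used $\varepsilon\Delta\psi=-e(c^+-c^-)$ from \eqref{poisson}.

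\textbf{Step 2 (the new entropy terms).} Set $s^\pm=c^\pm/\rho$. The mass equation together with \eqref{cplus}--\eqref{cmoins} gives $\rho(\partial_t s^\pm+u\cdot\nabla s^\pm)=\D(A^\pm J^\pm)$. Hence, using $\sigma'(s)=\ln c-\ln\rho$,
$$\frac{d}{dt}\int\rho\sigma(s^\pm)=\int\sigma'(s^\pm)\D(A^\pm J^\pm)=-A^\pm\int J^\pm\cdot\nabla\ln c^\pm + A^\pm\int J^\pm\cdot\nabla\ln\rho.$$
Multiplying by $2\mu/A^\pm$, the last term cancels the cross term of Step 1 exactly. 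The first term is
$$-2\mu\int J^\pm\cdot\frac{\nabla c^\pm}{c^\pm}=-8\mu\int|\nabla\sqrt{c^\pm}|^2\mp 2\mu e\int\nabla\psi\cdot\nabla c^\pm .$$
Summing over $\pm$ and integrating by parts with \eqref{poisson}, the $\psi$-terms combine into $2\mu e\int\Delta\psi\,(c^+-c^-)=-2\mu\varepsilon\int|\Delta\psi|^2$.

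\textbf{Step 3.} Collecting Steps 1 and 2 gives \eqref{BD entropy final} with $E_2$ as in \eqref{BD entropy final2}. I expect the main obstacle to be the bookkeeping in Step 1: one must check that the $u$-part of the forcing combines with the PNP identity exactly as in the energy proof, while the $\nabla\ln\rho$-part is left over. One must also verify that no term in $\nabla\ln\rho\cdot\D(\rho\,\cdot)$ survives beyond $\rho|A(u)|^2$. That cancellation relies on the specific choice $\mu(\rho)=\mu\rho$, $\lambda=0$, together with the antisymmetry of $A(u)$ and the periodic boundary conditions.
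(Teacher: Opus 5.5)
Your proof is correct, and Step 1 is the same as the paper's: the effective velocity $u+2\mu\nabla\ln\rho$, the antisymmetry of $A(u)$ against the Hessian of $\ln\rho$, and the PNP energy computation absorbing the $u$-part of the forcing. Both leave the cross term $-2\mu\int (J^++J^-)\cdot\nabla\ln\rho$.

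The difference is in Step 2. The paper handles the cross term with two separate multiplier computations:
\begin{itemize}
\item It tests $\frac{1}{A^\pm}(\partial_t c^\pm+\D(c^\pm u))=\D(J^\pm)$ against $\mu\ln\rho$ and adds $\frac{\mu c^\pm}{A^\pm}(\partial_t\ln\rho+u\cdot\nabla\ln\rho+\D u)=0$. This turns the cross term into $\frac{2\mu}{A^\pm}\frac{d}{dt}\int c^\pm\ln\rho$ plus $\frac{2\mu}{A^\pm}\int c^\pm\D u$.
\item It tests the same equation against $\mu\ln c^\pm$. This gives $\frac{d}{dt}\int\sigma(c^\pm)$, the Fisher information, the $|\Delta\psi|^2$ term, and the same $\int c^\pm\D u$ term, which cancels when the identities are added.
\item It finally uses $\sigma(c^\pm)-c^\pm\ln\rho=\rho\,\sigma(c^\pm/\rho)+1-\rho$ together with conservation of $\int\rho$.
\end{itemize}

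You instead differentiate $\int\rho\,\sigma(c^\pm/\rho)$ directly, using the transport form $\rho(\partial_t s^\pm+u\cdot\nabla s^\pm)=\D(A^\pm J^\pm)$. The $c^\pm\D u$ terms never appear, and you need neither the algebraic identity nor mass conservation.

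Your route is shorter and shows why $\rho\,\sigma(c^\pm/\rho)$ is the right functional. The multiplier $\sigma'(c^\pm/\rho)=\ln c^\pm-\ln\rho$ splits into a $\ln\rho$ part that cancels the BD cross term and a $\ln c^\pm$ part that produces $8\mu\int|\nabla\sqrt{c^\pm}|^2$ and $2\mu\varepsilon\int|\Delta\psi|^2$. The paper's route is closer to how the entropy is discovered, assembling it from the separate pieces $\int c^\pm\ln\rho$ and $\int\sigma(c^\pm)$.

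One minor notational slip: to be consistent with $\D(\rho w\otimes u)$ in your momentum identity, the transport term in the equation for $\nabla\rho$ should read $\D(\nabla\rho\otimes u)$, as in the paper, rather than $\D(\rho u\otimes\nabla\ln\rho)$.
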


\bigskip

\begin{proof} {\bf The compressible Navier-Stokes contribution:} 
Taking the gradient of the  mass equation, we get
$$\partial_t(\nabla\rho) + {\rm div}(\nabla \rho\otimes u) + {\rm div}(\rho {}^t\nabla u) = 0$$
that we can write 
$$\partial_t(\rho v) + {\rm div}(v \otimes u) + {\rm div}(\mu \rho {}^t\nabla u) = 0$$
where 
$v= \mu \nabla\log\rho$.
Multiplying by $2$ and adding to the momentum equation on $u$, we get 
\begin{equation}\label{eq u+v choix f}
    \partial_t (\rho (u+2v)) + \D( \rho u \otimes (u+2v)) - 2 \D(\mu \rho A(u)) + \nabla p (\rho) = - c^+ \nabla (\ln c^+ + e \psi) - c^- \nabla (\ln c^- - e \psi)
\end{equation}
where $A(u) = (\nabla u - {}^t\nabla u)/2$. Then, taking the scalar product of (\ref{eq u+v choix f}) with $(u+2v)$ and using the mass equation tested by $|u+2v|^2/{2}$, we get 
\begin{align*}
    (u+2v) \cdot \partial_t (\rho (u+2v)) & + (u+2v)\cdot  \D( \rho u \otimes (u+2v)) - 2 (u+2v)\cdot  \D(\mu \rho A(u)) + (u+2v) \cdot \nabla p(\rho) \\
    &- \frac{|u+2v|^2}{2} \partial_t \rho - \frac{|u+2v|^2}{2} \D(\rho u) \\
    &= - c^+ \nabla (\ln c^+ + e \psi) \cdot (u+2v) - c^- \nabla (\ln c^- - e \psi) \cdot (u+2v)
\end{align*}
so, 
\begin{align*}
    \partial_t \left( \rho \frac{|u+2v|^2}{2} \right) & + \D \left( \rho u \otimes \frac{|u+2v|^2}{2} \right) - 2 (u+2v) \D(\mu \rho A(u)) + (u+2v) \nabla p(\rho) \\
    &= - c^+ \nabla (\ln c^+ + e \psi) \cdot (u+2v) - c^- \nabla (\ln c^- - e \psi) \cdot (u+2v)
\end{align*}

\noindent Integrating this equality over $\Omega$, we have 
\begin{align}
    \frac{d}{dt} \int_{\Pi^d} & \rho \frac{|u+2v|^2}{2} - 2 \mu \int_{\Pi^d} (u+2v)\cdot  \D(\rho A(u)) + \int_{\Pi^d} (u+2v)\cdot  \nabla p(\rho) \nonumber \\
    &= - \int_{\Pi^d} c^+ \nabla (\ln c^+ + e \psi) \cdot (u+2v) - \int_{\Pi^d} c^- \nabla (\ln c^- - e \psi) \cdot (u+2v) 
\end{align}

Let consider the pressure term, 
\begin{align*}
    (u+2v) \cdot \nabla p(\rho) &= u \cdot \nabla p(\rho) + 2\mu \nabla \ln\rho \cdot \nabla p(\rho) \\
    &= u \cdot \nabla p(\rho) + 2\mu \frac{1}{\rho} \nabla \rho \cdot \nabla p(\rho) \\
    &= u \cdot \nabla p(\rho) + 2 \mu\frac{ p'(\rho)}{\rho} |\nabla \rho|^2
\end{align*}
Thus, 
\begin{align*}
    \int_{\Pi^d} (u+2v) \cdot \nabla p(\rho) & = \int_{\Pi^d} u \cdot \nabla p(\rho) + 2 \mu \int_{\Pi^d} \frac{ p'(\rho)}{\rho} |\nabla \rho|^2 \\
    &= \int_{\Pi^d} u \cdot \nabla (\pi'(\rho)) \rho + 2 \mu \int_{\Pi^d} \frac{ p'(\rho)}{\rho} |\nabla \rho|^2 \\
    &= - \int_{\Pi^d} \pi'(\rho) \D(\rho u) + 2 \mu \int_{\Pi^d} \frac{ p'(\rho)}{\rho} |\nabla \rho|^2 \\
    &= \int_{\Pi^d} \pi'(\rho) \partial_t \rho + 2 \mu \int_{\Pi^d} \frac{ p'(\rho)}{\rho} |\nabla \rho|^2 \\
    & = \frac{d}{dt} \int_{\Pi^d} \pi(\rho) + 2 \mu \int_{\Pi^d} \frac{p'(\rho)}{\rho} |\nabla \rho|^2
\end{align*}
where $\Pi(\rho) = \rho \int_{\bar{\rho}}^\rho \frac{p(s)}{s^2} ds = \rho e(\rho)$ with $\bar{\rho}$ a constant reference density. For the viscous term,
$$\int_{\Pi^d} v \cdot \D(\mu \rho A(u)) = 2 \int_{\Pi^d} \nabla \ln\rho \D(\mu \rho A(u)) = 0$$ as $\nabla \ln \rho$ is a gradient. 
Moreover 
\begin{align*}
    \int_{\Pi^d} u\cdot  \D(\mu \rho A(u)) &= - \int_{\Pi^d} \nabla u : (\mu \rho A(u)) \\
    &= - \mu \int_{\Pi^d} \rho |A(u)|^2
\end{align*}

\noindent Finally we obtain the equality 
\begin{align}\label{BD entropy u + v}
    \frac{1}{2} \frac{d}{dt} & \int_{\Pi^d} \left( \rho |u + 2\mu \nabla \ln \rho|^2 + 2 \rho e(\rho) \right) + 2 \mu \int_{\Pi^d} \rho |A(u)|^2 + 2 \mu \int_{\Pi^d} \frac{ p'(\rho)}{\rho} |\nabla \rho|^2 \nonumber \\
    &= - \int_{\Pi^d} c^+ \nabla (\ln c^+ + e \psi) \cdot (u + 2\mu \nabla \ln\rho) - \int_{\Pi^d} c^- \nabla (\ln c^- - e \psi) \cdot (u + 2\mu \nabla\ln\rho )
\end{align}
Let's see how to write the terms of the right hand side
\begin{align*}
    - \int_{\Pi^d} c^+ u\cdot  \nabla (\ln c^+ + e \psi) &= \int_{\Pi^d} \D(c^+ u) (\ln c^+ + e \psi) \\
    &= \int_{\Pi^d} (\ln c^+ + e \psi) \left[ \D(A^+ c^+ \nabla (\ln c^+ + e \psi) - \partial_t c^+ \right] \\
    &= - \int_{\Pi^d} A^+ c^+ |\nabla (\ln c^+ + e \psi)|^2 - \int_{\Pi^d} \partial_t c^+ (\ln c^+ + e \psi) \\
    &= - \int_{\Pi^d} A^+ c^+ |\nabla (\ln c^+ + e \psi)|^2 - \int_{\Pi^d} \left( \partial_t (c^+ \ln c^+ - c^+ +1) - e\psi \partial_t c^+ \right).
\end{align*}

\noindent Doing the same for $- \int_{\Pi^d} c^- u \nabla (\ln c^- - e \psi)$, and combining, we have 
\begin{align*}
    - \int_{\Pi^d} c^+ u\cdot  \nabla (\ln c^+ + e \psi) & - \int_{\Pi^d} c^- u \cdot \nabla (\ln c^- - e \psi) = - \int_{\Pi^d} A^+ c^+ |\nabla (\ln c^+ + e \psi)|^2 - \int_{\Pi^d} A^- c^- |\nabla (\ln c^- - e \psi)|^2 \\
    & - \frac{d}{dt} \int_{\Pi^d} 
      \bigl[(c^+ \ln c^+ - c^+ +1) + (c^- \ln c^- - c^- +1)\bigr] 
      - \int_{\Pi^d} e\psi \partial_t (c^+ - c^-).
\end{align*}

\noindent Using the Poisson equation, 
\begin{align*}
    - \int_{\Pi^d} e\psi \partial_t (c^+ - c^-) &= \int_{\Pi^d} \varepsilon \psi \partial_t (\Delta \psi) \\
    &= - \int_{\Pi^d} \varepsilon \partial_t \left( \frac{|\nabla \psi|^2}{2} \right)
\end{align*} 

\noindent Then, (\ref{BD entropy u + v}) writes as 
\begin{align}{\label{BD entropy f(rho)}}
    \frac{1}{2} \frac{d}{dt}  \int_{\Pi^d} \rho |u + 2\mu\nabla \ln\rho|^2 & + \frac{d}{dt} \int_{\Pi^d} \rho e(\rho) + 2 \mu \int_{\Pi^d} \rho |A(u)|^2 + 2 \mu \int_{\Pi^d} \frac{p'(\rho)}{\rho} |\nabla \rho|^2 + \frac{d}{dt} \int_{\Pi^d} \varepsilon \frac{| \nabla \psi|^2}{2} \nonumber \\
    &+ \frac{d}{dt} \int_{\Pi^d} \left( \sigma (c^+) + \sigma (c^-) \right)  + \int_{\Pi^d} A^+ c^+ |\nabla (\ln c^+ + e \psi)|^2 + \int_{\Pi^d} A^- c^- |\nabla (\ln c^- - e \psi)|^2 \nonumber \\
    &= - 2 \int_{\Pi^d} c^+ \nabla (\ln c^+ + e \psi) \nabla \ln \rho - 2 \int_{\Pi^d} c^- \nabla (\ln c^- - e \psi) \nabla \ln \rho.
\end{align}

\noindent We now have to express the two remaining terms differently to obtain the desired inequality wanted. 

{\bf The Poisson-Nernst-Planck contribution:} The Nernst-Planck equations for the concentrations may be written as 
\begin{EqSystem}
    \partial_t c^+ + \D(c^+u) = \D(A^+ c^+ \nabla (\ln c^+ + e \psi)) \\
    \partial_t c^- + \D(c^-u) = \D(A^- c^- \nabla (\ln c^- - e \psi))
\end{EqSystem}
and so, up to the constant $A^+$ and $A^-$, we recognize the term we want to express. We get 
\begin{equation*}
    \left[  \frac{1}{A^+} \left(\partial_t c^+ + \D(c^+ u) \right) + \frac{1}{A^-} \left(\partial_t c^- + \D(c^- u) \right) \right]= \left[ \D(c^+ \nabla (\ln c^+ + e \psi) + \D(c^- \nabla (\ln c^- - e \psi) \right]
\end{equation*}
So, multiplying by $ \mu \ln\rho$ and integrating over ${\Pi^d}$, we obtain after integrating by parts
\begin{align}{\label{eq c+/- divided A^+/-}}
    \int_{\Pi^d} & \left[  \frac{1}{A^+} \left(\partial_t c^+ + \D(c^+ u) \right) + \frac{1}{A^-} \left(\partial_t c^- + \D(c^- u) \right) \right] \mu  \ln\rho \nonumber \\
    &= - \int_{\Pi^d} \left[ (c^+ \nabla (\ln c^+ + e \psi) + (c^- \nabla (\ln c^- - e \psi) \right] \mu \nabla \ln\rho
\end{align}
Then, dividing the mass equation by $\rho$, we have 
$$\partial_t \ln \rho + u \cdot \nabla \ln\rho + \D u = 0$$
Multiplying this, respectively by $\frac{\mu c^+}{A^+}$ and $\frac{\mu c^-}{A^-}$ and taking the sum, we get 
$$ \frac{1}{A^+} \left( \mu c^+ \partial_t \ln\rho + \mu c^+ u \cdot \nabla \ln \rho + \mu c^+ \D(u) \right) + \frac{1}{A^-} \left( \mu c^- \partial_t \ln\rho + \mu c^-u \cdot \nabla \ln\rho + \mu c^- \D(u) \right) = 0$$
Integrating over ${\Pi^d}$ and adding this equality to (\ref{eq c+/- divided A^+/-}), we get 
\begin{align}
    \int_{\Pi^d} & \left[ \frac{1}{A^+} \left( \partial_t (c^+ \mu \ln\rho) + \D(c^+u \mu \ln\rho + \mu c^+ \D(u) \right) + \frac{1}{A^-} \left( \partial_t (c^- \mu \ln\rho) + \D(c^-u \mu \ln\rho + \mu c^- \D(u) \right) \right] \nonumber \\
    &= - \mu \int_{\Pi^d} \left[ (c^+ \nabla (\ln c^+ + e \psi) + (c^- \nabla (\ln c^- - e \psi) \right]\cdot \nabla \ln\rho
\end{align}
Then, (\ref{BD entropy f(rho)}) can be written 
\begin{align}{\label{BD entropy I}}
    \frac{1}{2} \frac{d}{dt}  \int_{\Pi^d} \rho |u + 2\mu \nabla \log\rho|^2 & + \frac{d}{dt} \int_{\Pi^d} \rho e(\rho) + 2 \mu \int_{\Pi^d} \rho |A(u)|^2 + 2 \mu \int_{\Pi^d} \frac{p'(\rho)}{\rho} |\nabla \rho|^2 + \frac{d}{dt} \int_{\Pi^d} \varepsilon \frac{| \nabla \psi|^2}{2} \nonumber \\
    &+ \frac{d}{dt} \int_{\Pi^d} \left( \sigma (c^+) + \sigma (c^-) \right)  + \int_{\Pi^d} A^+ c^+ |\nabla (\ln c^+ + e \psi)|^2 + \int_{\Pi^d} A^- c^- |\nabla (\ln c^- - e \psi|^2 \nonumber \\
    &= 2 \left( \frac{\mu }{A^+} \frac{d}{dt} \int_{\Pi^d} c^+ \ln\rho + \frac{\mu }{A^-} \frac{d}{dt} \int_{\Pi^d} c^- \ln\rho \right) + 2 \int_{\Pi^d} \left( \frac{\mu c^+ \D(u)}{A^+} + \frac{\mu c^- \D(u)}{A^-} \right)
\end{align}
We are looking for another expression to sum with this one, to simplify the terms with $c^+ \D(u)$ and $c^-\D(u)$. \\
We write the equations for the concentrations 
\begin{align*}
    \frac{1}{A^+} \left( \partial_t c^+ + \D(c^+ u) \right) &= \D(2 \sqrt{c^+} \nabla \sqrt{c^+} + e c^+ \nabla \psi) \\
    \frac{1}{A^-} \left( \partial_t c^- + \D(c^- u) \right) &= \D(2 \sqrt{c^-} \nabla \sqrt{c^-} - e c^- \nabla \psi)
\end{align*}
We multiply the first one by $\mu \ln c^+$ and we integrate, we have 
\begin{align*}
    2 \mu \int_{\Pi^d} \D(\sqrt{c^+} \nabla \sqrt{c^+}) \ln c^+ &= - 2 \mu \int_{\Pi^d} 2 \sqrt{c^+} \nabla \sqrt{c^+}\cdot  \frac{\nabla c^+}{c^+} \\
    &= - \int_{\Pi^d} 4 \mu |\nabla \sqrt{c^+}|^2
\end{align*}

\begin{align*}
    \int_{\Pi^d} \mu \D(e c^+ \nabla \psi) \ln c^+ &= - \int_{\Pi^d} \mu c^+ e \nabla \psi \cdot \frac{\nabla c^+}{c^+} \\
    &= - \int_{\Pi^d} \mu e \nabla c^+ \cdot \nabla \psi
\end{align*}

\begin{align*}
    \frac{1}{A^+} \left( \partial_t c^+ + \D(c^+ u) \right)\mu \ln c^+ &= \frac{\mu}{A^+} \left( \partial_t c^+ \ln c^+ + (u \cdot  \nabla c^+) \ln c^+ + c^+ \D u \ln c^+ \right) \\
    &= \frac{\mu}{A^+} \left( \partial_t (c^+ \ln c^+) - c^+ \frac{\partial_t c^+}{c^+} + u \cdot \nabla (c^+ \ln c^+) - c^+ u \cdot \frac{\nabla c^+}{c^+} + c^+ \D u \text{ln}(c^+) \right) \\
    &= \frac{\mu}{A^+} \left( \partial_t (c^+ \ln c^+) + \D(u c^+ \ln c^+) - \left( \partial_t c^+ + u \cdot \nabla c^+ \right) \right)
\end{align*}
Combining these three computations, we get for $c^+$, 
$$\frac{d}{dt} \int_{\Pi^d} \frac{\mu}{A^+} \left( c^+ \ln c^+ - c^+ + 1 \right) + \int_{\Pi^d} \frac{\mu \D u c^+}{A^+} + 4 \mu \int_{\Pi^d} |\nabla \sqrt{c^+}|^2 = - \int_{\Pi^d} \mu e \nabla c^+ \cdot \nabla \psi$$
Doing the same for $c^-$, we have 
$$\frac{d}{dt} \int_{\Pi^d} \frac{\mu}{A^-} \left( c^- \ln c^- - c^- + 1 \right) + \int_{\Pi^d} \frac{\mu \D u c^-}{A^-} + 4 \mu \int_{\Pi^d} |\nabla \sqrt{c^-}|^2 = \int_{\Pi^d} \mu e \nabla c^- \cdot \nabla \psi$$
Multiplying the Poisson equation by $\mu \Delta \psi$, we get 
$$\int_{\Pi^d} \mu \varepsilon |\Delta \psi|^2 = - \int_{\Pi^d} \mu e(c^+ - c^-) \Delta \psi = \int_{\Pi^d} \mu e \nabla (c^+ - c^-) \cdot \nabla \psi$$
So, summing the equations for $c^+$ and $c^-$, we obtain 
\begin{align}{\label{eq racine c+/-}}
    \frac{d}{dt} \int_{\Pi^d} & \left( \frac{\mu}{A^+}(c^+ \ln c^+ - c^+ +1) + \frac{\mu}{A^-}(c^- \ln c^- - c^- +1) \right) + \int_{\Pi^d} \left( \frac{\mu c^+ \D(u)}{A^+} + \frac{\mu c^- \D(u)}{A^-} \right) \nonumber \\
    &+ 4\mu \int_{\Pi^d} |\nabla \sqrt{c^+}|^2 + 4\mu \int_{\Pi^d} |\nabla \sqrt{c^-}|^2 + \int_{\Pi^d} \mu \varepsilon |\Delta \psi|^2 = 0
\end{align}
We multiply (\ref{eq racine c+/-}) by 2 and add it to (\ref{BD entropy I}), we get 
\begin{align}{\label{BD entropy II}}
    \frac{1}{2} \frac{d}{dt}  \int_{\Pi^d} \rho |u + 2\mu \nabla \log\rho|^2 & + \frac{d}{dt} \int_{\Pi^d} \rho e(\rho) + \frac{d}{dt} \int_{\Pi^d} \varepsilon \frac{| \nabla \psi|^2}{2} + \frac{d}{dt} \int_{\Pi^d} \left( \sigma (c^+) + \sigma (c^-) \right) \nonumber \\
    & + \frac{d}{dt} \int_{\Pi^d} \left( \frac{2 \mu }{A^+}(c^+ \ln c^+ - c^+ +1) + \frac{2 \mu}{A^-}(c^- \ln c^- - c^- +1) \right) \nonumber \\
    &+ 8 \mu \int_{\Pi^d} |\nabla \sqrt{c^+}|^2 + 8 \mu \int_{\Pi^d} |\nabla \sqrt{c^-}|^2   + 2 \mu \int_{\Pi^d} \rho |A(u)|^2 + 2 \mu \int_{\Pi^d} \frac{p'(\rho)}{\rho} |\nabla \rho|^2  \nonumber \\
    &+ 2 \mu \int_{\Pi^d} \varepsilon |\Delta \psi|^2 + \int_{\Pi^d} A^+ c^+ |\nabla (\ln c^+ + e \psi)|^2 + \int_{\Pi^d} A^- c^- |\nabla (\ln c^- - e \psi|^2 \nonumber \\
    &= \frac{2 \mu}{A^+} \frac{d}{dt} \int_{\Pi^d} c^+ \ln \rho + \frac{2 \mu}{A^-} \frac{d}{dt} \int_{\Pi^d} c^- \ln \rho
\end{align}

As all the terms on the l.h.s are all positive, we only have to control the terms on the r.h.s. \\
Noticing that 
\begin{align*}
    c^+ \ln c^+ - c^+ +1 - c^+\ln \rho &= c^+ ( \ln c^+  - \ln\rho) - c^+ + 1 \\
    &= \rho \left( \frac{c^+}{\rho} \ln (\frac{c^+}{\rho}) - \frac{c^+}{\rho} + 1 \right) + 1 - \rho
\end{align*}
Finally, we get the conclusion plugging everything together.
\end{proof} 

\begin{Remark}
We note that we have considered here the case $\mu(\rho)= \mu\, \rho$ and $\lambda(\rho)= 0$. The more general case with $\lambda(\rho) = 2 (\mu'(\rho)\rho -\mu(\rho)).$
\end{Remark}

\section{The approximate system.}
 We have now in hand the energy and an extension of the BD entropy as possible tools. This is now time to construct approximate solutions which will satisfy the bounds given by these two quantities uniformly with respect to the parameters we will introduce to  allow  to pass to the limit with respect to all the parameters. Namely we want to build solution of the following regularized system
 \begin{EqSystem}
    \partial_t \rho + \D(\rho u) = \xi\Delta\rho , \label{eq 1 PNPNS}  \\
   \partial_t(\rho u) + {\rm div}(\rho u \otimes u) - 2\mu {\rm div}(\rho D(u)) + \nabla p(\rho) \nonumber \\
   \hskip2cm + \eta \Delta^2 u - \delta \rho \nabla \Delta^{2s+1} \rho  + \xi \nabla \rho \cdot \nabla u 
   = \varepsilon \Delta \psi \nabla \psi - \nabla(c^++c^-)
    \label{eq 2bis} \\
    \partial_t c^+ + \D(c^+ u) = \D(A^+ \nabla c^+) + \D(A^+ e c^+ \nabla \psi), \label{eq 3 PNPNS} \\
    \partial_t c^- + \D(c^- u) = \D(A^- \nabla c^-) - \D (A^- e c^- \nabla \psi), \label{eq 4 PNPNS} \\
     - \varepsilon \Delta \psi = \Psi    \hbox{ with }   (1-\zeta \Delta) \Psi = e(c^+ - c^-).   \label{eq psi pnpns} 
\end{EqSystem}
Remark that the main ingredients for regularization in the PDE above at the level of the density dependent viscosities compressible Navier-Stokes equations has been firstly proposed in \cite{BrDe1} and developed in \cite{MuPoZa} with a part of cold pressure.  For more general density dependent viscosities with also some singular pressure close to vacuum, this has been extended in \cite{BrDeZa} using the augmented formulation and a $\kappa$ mathematical entropy that will not be discussed here in our paper since we only consider the case $\mu(\rho)= \mu \rho$, $\lambda(\rho)=0$. 

\begin{Remark}
 Note that for the density dependent viscosities compressible Navier-Stokes equations, the introduction of a singular pressure close to vacuum has been removed in {\rm \cite{VaYu}} for $\mu(\rho) = \mu \, \rho$ with $\mu>0$ constant and in {\rm \cite{BrVaYu}} for more general density dependent viscosities such that $\lambda(\rho) = 2(\mu'(\rho)\rho - \mu(\rho))$ and $\lambda(\rho) + 2\mu(\rho)/d \ge \varepsilon \mu(\rho)$ for some $\varepsilon>0$. Note that in the present Poisson-Nernst-Planck system coupled by the compressible Navier-Stokes equations with degenerate viscosity, we are concerned how to define the velocity field which involves in the equations of the two concentrations $c^\pm$. A singular pressure seems thus needed as already noticed in works on chemical reacting gaseous mixture, see {\rm  \cite{MuPoZa}} and {\rm \cite{Za}}. 
 \end{Remark} 
 
 The first thing to prove is a result concerning the regularized system. Namely to prove the following theorem.
 \begin{theorem} Let $(\xi, \eta,\delta,\zeta)$ be fixed positive parameters. Assume the initial data  satisfy 
 \begin{align}\int_{\Pi^d} \Bigl[ \frac{1}{2}\frac{|m_0|^2}{\rho_0}
 & + \rho_0 e(\rho_0)\Bigr] 
  + \int_{\Pi^d} \varepsilon \frac{|\nabla\Psi_0|^2}{2} 
 +  \int_{\Pi^d} \mu^2|\nabla \sqrt\rho_0|^2 
 + \int_{\Pi^d} \varepsilon\zeta \frac{|\Delta \psi_0|^2}{2}
 \\
 & \nonumber + \int_{\Pi^d} \rho_0 \Bigl[\sigma(c^+_0/\rho_0) + \sigma(c^-_0/\rho_0)\Bigr]
 + \int_{\Pi^d} \Bigl[\sigma(c^+_0) + \sigma(c^-_0)\Bigr] 
 + \frac{\delta}{2}\int_{\Pi^d}|\nabla\Delta^s\rho_0|^2 <+\infty.
 \end{align}
 Then there exist $(\rho,u,c^+,c^-,\psi)$ solutions of System \eqref{eq 1 PNPNS}--\eqref{eq psi pnpns} where the first
 equation holds a.e. on $(0,T)\times {\Pi^d}$ together with the initial condition $\rho\vert_{t=0} = \rho^0$ and the remaining ones are satisfied in the sense of distributions on $(0,T)\times {\Pi^d}$ with the initial conditions $\rho u\vert_{t=0} = m_0$, 
 $c^\pm \vert_{t=0} = c^\pm_0$ satisfies in the sense of distributions
 on ${\Pi^d}$. Moreover, we have
  \begin{EqSystem}\label{estim} 
   \rho \in L^2(0,T;W^{2s+2,2}(\Pi^d))\cap L^\infty (0,T;W^{2s+1,2}(\Pi^d))
   \qquad \partial_t \nabla \rho \in L^2((0,T)\times \Pi^d), 
     \\
     \rho^{-1} \in L^\infty((0,T)\times \Pi^d) \hbox{ with the bound depends on }  \delta \\
    u \in L^2(0,T; W^{2,2}(\Pi^d)) \cap L^\infty(0,T; L^2(\Pi^d)) \\
    \sqrt c^\pm \in L^\infty (0,T;L^2(\Pi^d)) \cap L^2(0,T;H^1(\Pi^d))\\
    \psi \in L^2(0,T;H^3(\Pi^d)) \cap L^\infty(0,T;H^2(\Pi^d)).
\end{EqSystem}
 \end{theorem}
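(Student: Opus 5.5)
I will construct the solution by a Faedo--Galerkin scheme for the velocity combined with fixed-point arguments for the scalar unknowns, in the spirit of \cite{BrDe1}, \cite{MuPoZa} and \cite{BrDeZa}. Fix $(\xi,\eta,\delta,\zeta)$ and a finite-dimensional space $X_n=\hbox{span}\{e_1,\dots,e_n\}$ of trigonometric polynomials, and look for $u_n\in \mathcal C([0,T];X_n)$.

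\textbf{Step 1: the scalar subproblems for a given Galerkin velocity.} Given $u\in\mathcal C([0,T];X_n)$, which is smooth in $x$:
\begin{itemize}
\item Equation \eqref{eq 1 PNPNS} is a linear parabolic equation. It has a unique classical solution $\rho=\rho[u]$, and the maximum principle gives two-sided bounds $\underline\rho\,e^{-\int_0^t\|\D u\|_\infty}\le\rho\le \overline\rho\, e^{\int_0^t\|\D u\|_\infty}$. These bounds come from regularized initial data bounded away from zero, which can be assumed after mollification.
\item The equations \eqref{eq 3 PNPNS}--\eqref{eq 4 PNPNS}, coupled with the regularized Poisson equation \eqref{eq psi pnpns}, form a nonlinear drift--diffusion system. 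Thanks to $(1-\zeta\Delta)^{-1}$ the drift $\nabla\psi$ is two derivatives smoother than $c^+-c^-$.
\item I solve this system by a Schauder (or Banach, on short time) fixed point in $L^2(0,T;H^1)$: freeze $\psi$, solve the linear equations, then recompute $\psi$.
\item Nonnegativity of $c^\pm$ follows from testing with the negative part.
\item Global existence on $[0,T]$ follows from the PNP entropy. This is obtained by testing with $\ln c^\pm\pm e\psi$ exactly as in the proof of the energy equality above. The $\zeta$ regularization changes the field term into $\frac{\varepsilon}{2}|\nabla\psi|^2+\frac{\varepsilon\zeta}{2}|\Delta\psi|^2$, which is why this quantity appears in the hypothesis.
\end{itemize}

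\textbf{Step 2: the Galerkin momentum equation.} I project \eqref{eq 2bis} onto $X_n$, with $\rho[u]$, $c^\pm[u]$ and $\psi[u]$ inserted. This gives an ODE system for the coefficients of $u_n$. Local existence follows by a Banach fixed point on a short interval, using continuous dependence of $(\rho,c^\pm,\psi)$ on $u$ in $\mathcal C([0,T];X_n)$ together with the lower bound on $\rho$. To extend to $[0,T]$ I need a bound on $\|u_n\|_{L^\infty L^2}$ uniform in time, which comes from the energy estimate of Step 3.

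\textbf{Step 3: energy estimate for the regularized system.} I redo the energy computation above, taking $u_n$ as test function in the Galerkin equation. The new regularizing terms behave as follows:
\begin{itemize}
\item The term $\eta\Delta^2u$ produces $\eta\|\Delta u\|_2^2$.
\item The term $-\delta\rho\nabla\Delta^{2s+1}\rho$, combined with the mass equation, gives $\frac{\delta}{2}\frac{d}{dt}\|\nabla\Delta^s\rho\|^2+\xi\delta\|\Delta^{s+1}\rho\|^2$.
\item The term $\xi\nabla\rho\cdot\nabla u$ is designed to cancel the kinetic-energy defect created by $\xi\Delta\rho$.
\item For the pressure, $\xi\Delta\rho\,(\rho e)'$ produces $\xi\int p'(\rho)|\nabla\rho|^2/\rho\ge0$, plus terms controlled in the region $\rho\le1$.
\end{itemize}
This yields bounds, independent of $n$, for $u_n\in L^\infty L^2\cap L^2 H^2$ (via $\eta$) and $\rho\in L^\infty H^{2s+1}\cap L^2H^{2s+2}$ (via $\delta,\xi$); the time-derivative bound $\partial_t\nabla\rho\in L^2$ then follows from the parabolic equation. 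Since $\rho\in H^{2s+1}\subset W^{1,\infty}$ for $s$ large, the singular pressure contribution $\int\rho e(\rho)$ together with the $\delta$-regularity gives $\rho^{-1}\in L^\infty$ with a $\delta$-dependent bound. This is the classical argument of \cite{MuPoZa}: a bounded $\nabla\rho$ combined with $\int\rho^{-4k}<\infty$ excludes vacuum. For $c^\pm$ and $\psi$, the PNP part of the energy gives $\sqrt{c^\pm}\in L^\infty L^2\cap L^2H^1$. Elliptic regularity for \eqref{eq psi pnpns} then gives $\psi\in L^\infty H^2$, and one more derivative in $L^2H^3$.

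\textbf{Step 4: limit $n\to\infty$.} The uniform bounds, together with Aubin--Lions compactness (from time-derivative estimates read off the equations), give strong convergence of $\rho_n$, $\rho_n^{-1}$, $u_n$ (in $L^2H^1$), $c_n^\pm$ and $\nabla\psi_n$. This suffices to pass to the limit in all nonlinear terms, and the estimates \eqref{estim} follow by lower semicontinuity.

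\textbf{Main obstacle.} I expect the hardest step to be the exact cancellation in Step 3. It requires checking that the artificial viscosity $\xi$ terms and the capillarity term $\delta$ produce no bad sign for the singular pressure near vacuum, and that the vacuum lower bound can be closed using only $\delta$-dependent quantities. The PNP part should be routine given the entropy structure.
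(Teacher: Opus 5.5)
Your architecture is the one the paper only sketches: the density and Poisson--Nernst--Planck part are solved for a frozen Galerkin velocity, a Banach fixed point gives a short interval, and an energy bound gives continuation. The paper closes with Leray--Schauder where you let $n\to\infty$. Your no-vacuum argument is a valid and simpler substitute for the paper's $W^{3,k}$ bound on $\rho^{-1}$: it combines $\|\nabla\rho\|_{L^\infty}\lesssim(E/\delta)^{1/2}$ with $\int\rho^{-4k}\lesssim 1+E$ and uses $4k>3$.

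The gap is in Step 3, and it is not where you locate the difficulty. The $\xi$- and $\delta$-terms do cancel as you say. What you treat as routine is the electric work, and the $\zeta$-regularization breaks its cancellation. Testing the $c^\pm$ equations with $\ln c^\pm\pm e\psi$ puts $\int u\cdot\nabla(c^++c^-)+e\int(c^+-c^-)\nabla\psi\cdot u$ on the right. Testing \eqref{eq 2bis} with $u$ gives $\int u\cdot\bigl(\varepsilon\Delta\psi\nabla\psi-\nabla(c^++c^-)\bigr)$. In the original system $e(c^+-c^-)=-\varepsilon\Delta\psi$ and these cancel. Here, however, \eqref{eq psi pnpns} gives $e(c^+-c^-)=(1-\zeta\Delta)(-\varepsilon\Delta\psi)=-\varepsilon\Delta\psi+\varepsilon\zeta\Delta^2\psi$, so what you actually get is
\begin{equation*}
\frac{d}{dt}E+\mathcal{D}=\varepsilon\zeta\int_{\Pi^d}\Delta^2\psi\,\nabla\psi\cdot u\,dx=\int_{\Pi^d}\bigl(e(c^+-c^-)+\varepsilon\Delta\psi\bigr)\nabla\psi\cdot u\,dx .
\end{equation*}
Here $E$ contains the $\frac{\varepsilon\zeta}{2}|\Delta\psi|^2$ term you identified and $\mathcal{D}$ is the dissipation you list. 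The right-hand side has no sign, so the exact identity you invoke is false for this system. As written, neither the continuation in Step 2 nor the $n$-independent bounds in Step 4 are justified.

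At fixed parameters the term can be controlled, but this needs an argument you do not give, based on the $\zeta$-smoothing.
\begin{itemize}
\item $\nabla\psi$ is obtained from $e(c^+-c^-)$ through a kernel with bounded gradient and $L^2$ Laplacian. Hence $\|\nabla\psi\|_{L^\infty}+\|\Delta\psi\|_{L^2}\le C_\zeta\|c^+-c^-\|_{L^1}$, which is controlled by the conserved masses.
\item On the torus $\|u\|_{L^\infty}\lesssim\|\Delta u\|_{L^2}+\int|u|$.
\item The singular pressure gives $\int\rho^{-1}\lesssim 1+E$, so $\int|u|\le(\int\rho^{-1})^{1/2}\|\sqrt\rho\,u\|_{L^2}\lesssim 1+E$.
\end{itemize}
Together these bound the term by $\frac{\eta}{2}\|\Delta u\|_{L^2}^2+C_{\zeta,\eta}(1+E)$, and a linear Gronwall argument closes with constants independent of $n$. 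That suffices for the theorem as stated.

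However, this bound depends on $\zeta$, so it does not give the $\zeta$-uniform estimate \eqref{BDentropiereg} that the paper needs for the subsequent limit $\zeta\to0$. The cleaner repair is to regularize the force consistently: use $-e(c^+-c^-)\nabla\psi=\varepsilon(1-\zeta\Delta)\Delta\psi\,\nabla\psi$ instead of $\varepsilon\Delta\psi\nabla\psi$ in \eqref{eq 2bis}. This has the same limit and makes your cancellation exact. The paper's outline is silent on this point.

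Separately, on $\Pi^d$ the equation $-\varepsilon\Delta\psi=\Psi$ is solvable only if $\int c_0^+=\int c_0^-$ (or after subtracting the mean). Step 1 uses this implicitly and should state it.
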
 
 Note that as it was in \cite{MuPoZa} it is important to remark that this theorem is the first step in the
 proof of our main result namely the existence of entropy-weak solutions because with such regularity, 
 we can prove that the following estimates is true
 \begin{align}{\label{BDentropiereg}}
    \frac{1}{2} & \int_{\Pi^d} \rho |u + 2\mu \nabla \ln\rho|^2  + \int_{\Pi^d} \rho e(\rho) + \int_{\Pi^d} \varepsilon \frac{| \nabla \psi|^2}{2} 
    + \int_{\Pi^d} \varepsilon\zeta \frac{|\Delta \psi|^2}{2}
    + \int_{\Pi^d} \left( \sigma (c^+) + \sigma (c^-) \right) \nonumber \\
    & +  \int_{\Pi^d} \left[ \frac{2 \mu}{A^+}\rho \left( \frac{c^+}{\rho} \ln(\frac{c^+}{\rho}) - \frac{c^+}{\rho} +1 \right) + \frac{2 \mu}{A^-} \rho \left( \frac{c^-}{\rho} \ln (\frac{c^-}{\rho}) - \frac{c^-}{\rho} +1 \right) \right] 
    + \frac{\delta}{2} \int_{\Pi^d} |\nabla \Delta^s \rho|^2   \nonumber \\
        & + \mu \int_0^T\int_{\Pi^d} \rho |A(u)|^2   + \mu \int_0^T \int_{\Pi^d} \frac{p'(\rho)}{\rho} |\nabla\rho|^2 
        \nonumber \\
        & + 2\delta  \int_0^T \int_{\Pi^d} |\Delta^{s+1} \rho|^2  + \mu \varepsilon\int_0^T \int_{{\Pi^d}} |\Delta \psi|^2
+ \delta\xi \int_0^T\int_{\Pi^d} |\Delta^{s+1} \rho|^2 + \eta  \int_0^T\int_{\Pi^d} |\Delta u|^2 \le C.
\end{align}
Let us outline the strategy of the proof for the existence result of such regularized system that we will not do because it follows the works in  \cite{MuPoZa} and also in some sense in \cite{MaWa} for the density equation and the equation on $c^\pm$ and $\psi$ :
\begin{itemize}
\item  For a given velocity field, solve the density equation with diffusion then solve the Poisson-Nernst-Planck equation and through a Faedo-Galerkin approximation  for the weak formulation of  the momentum equation find the new velocity.
\item For suﬃciently small time interval, we find as usually the unique solution to the momentum equation applying the Banach fixed point theorem. Then we extend the existence result for the maximal time interval.  
\item We recover the full system using a version of the Leray--Schauder fixed point theorem.
\end{itemize}

\bigskip

\noindent {\bf Important remarks.}
We observe that the energy estimates and the BD entropy provides the following bounds on $\rho$
$$\rho^{-2k} \in L^\infty(0,T;L^2({\Pi^d}))\cap
    L^2(0,T;H^1({\Pi^d}))$$
uniformly with respect to the parameters.
Assuming $\delta$ to be fixed, as introduced in \cite{BrDe2} and developed also in \cite{MuPoZa}, taking $s$ suﬃciently large in \eqref{eq 2bis} we can show that the density is separated from 0 uniformly with respect to all approximation parameters except for $\delta$. Indeed, since
by the Sobolev embedding $\|\rho^{-1}\|_{L^\infty} \le C \|\rho^{-1}\|_{W^{3,k}(\Pi^d)}$  for some $k>1$
and
$$\|\nabla^3 \rho^{-1}\|_{L^k(\Pi^d)} \le (1+ \|\nabla^3\rho\|_{L^{2k}(\Pi^d)})^3(1+\|\rho^{-1}\|_{L^{4k}(\Pi^d)})^4 $$
is bounded on account of the energy estimate and the mathematical entropy;  provided that $2s+ 1 \ge 4$, we have
$$\|\rho^{-1}\|_{L^\infty((0,T)\times \Pi^d)} \le c(\delta) \hbox{ a.e. in } (0,T)\times \Pi^d.$$
In some sense, we work with no vacuum for the approximate system. Recall that from the energy and BD entropy, the bound on $\rho^{-2k}$
is uniform with respect to the parameters. This information
allows to get a bound on $u$ and $\nabla u$ from the bound $\sqrt\rho \nabla u \in L^2((0,T)\times {\Pi^d})$ and
$\sqrt\rho u \in L^2((0,T)\times {\Pi^d}).$
More precisely, we can write
$$\|\nabla u\|_{L^{p}(0,T;L^{q}({\Pi^d}))} 
\le \Bigl(1+ \|\nabla \rho^{-2k}\|_{L^2((0,T)\times{\Pi^d})} \Bigr)
\|\rho^{1/2} \nabla u\|_{L^2((0,T)\times{\Pi^d})}
$$
where 
$$1/p = 1/2 + 1/8k, \qquad
  1/q = 1/2 + 1/24k.
$$
Moreover using the Sobolev embedding, we get
$$u\in L^p(0,T;L^{q^\star}({\Pi^d}))$$
with $p=8k/(4k+1)$ and  
$q^\star = 24k/(4k+1).$
Note that when $k$ tends to $\infty$ then
$p=2, q=2$ and $q^\star=6.$ 
This information on $u$ and $\nabla u$ will be used to prove the nonlinear weak stability. We will not perform the asymptotic limits (First $\zeta$ tend to zero then $\eta, \xi$ to zero and finally $\delta$ tend to zero) from the approximate system to the Poisson-Nernst-Planck compressible Navier-Stokes system since this follows a combination of what is done for the compressible Navier-Stokes equations with degenerate viscosities for instance in \cite{BrDe1, MuPoZa, Za} and an adaptation of what is done in \cite{MaWa} for the Poisson-Nernst-Planck system  with appropriate bounds on the velocity field using the singular pressure. We focus on the weak nonlinear stability which is the most difficult part to prove.

\section{Stability / proof of the main Theorem} 
 In this section, we prove the weak nonlinear stability for the Poisson-Nernst-Planck compressible Navier-Stokes system with the shear viscosity $\mu(\rho) = \mu \, \rho$ and $\lambda(\rho)$. 
 More precisely we assume to have a sequence $(\rho_n,u_n,c^\pm_n,\psi_n)$ of global entropy weak solutions of the system satisfying uniformly the energy estimates and the  BD entropy given in Section 2. We want to prove that there exists a subsequence converging to $(\rho,u,c^\pm,\psi)$ in a sense to be precised where this limit is a global  entropy weak solution of the Poisson-Nernst-Planck compressible Navier Stokes system. This is important to note that the limit passage in the mass and momentum equations except the term $c_n^\pm \nabla \psi_n$ is the same than from the compressible Navier-Stokes equations with density dependent viscosities. In the equation related to $c^\pm$, the non-linear quantities we have to pass to the limit are
 $u_n c_n^\pm$ and $c_n^\pm \nabla\psi_n$.
 Note that, as in \cite{MaWa}, we have
$$\nabla \sqrt c_n^\pm \in L^2((0,T)\times {\Pi^d}),
\qquad 
  \sqrt c_n^\pm \in L^\infty(0,T; L^2({\Pi^d})).$$
  Therefore
$$c_n^\pm \in L^1(0,T;L^3({\Pi^d}))$$
By interpolation $\|\cdot\|_{L^r({\Pi^d})} \le \|\cdot\|^\theta_{L^3({\Pi^d})} 
 \|\cdot \|_{L^1({\Pi^d})}^{1-\theta}$
 where $0\le \theta\le 1$, we get
$$c_n^\pm \in L^{1/\theta}(0,T;L^r({\Pi^d}))
\hbox{ where } r = 3/(3-2\theta) \hbox{ and } 0<\theta\le 1.
$$
Recall (sse the section before) that we have
$$u_n\in L^p(0,T;L^{q^\star}({\Pi^d}))
\hbox{ uniformly with respect to } n$$
where $p=8k/(4k+1)$, $q^\star = 24k/(4k+1).$

\begin{Remark}
Remark that due to the degeneracy of the viscous quantity, 
we cannot prove that $u_n\in L^2((0,T)\times {\Pi^d})$
even with the singular pressure.
\end{Remark}

 Let us adapt the lines proved in \cite{MaWa} on 
$\partial_t \sqrt{c_n^\pm +1}$ showing that we have enough integrability on $u_n$ and $\nabla u_n$ to prove compactness on $c_n^\pm$. First let us prove compactness on  $\sqrt{c_n^\pm}$ observing that we already know that
$\sqrt{c_n^\pm} \in L^2((0,T)\times H^1({\Pi^d}))$.
As proved in \cite{MaWa}, we have
\begin{align}
2\partial_t \sqrt{1+c_n^\pm}
= 
& - {\rm div} 
\Bigl(\bigl(\frac{c_n^\pm}{1+c_n^\pm}  \bigr)^{1/2}\sqrt{c_n^\pm} u_n  \Bigr)
- \frac{(c_n^\pm)^{3/2}}
{(1+c_n^\pm)^{3/2}}u_n \cdot \nabla (c_n^\pm)^{1/2} 
\nonumber \\
& + {\rm div}
\Bigl(\bigl( 
\frac{c_n^\pm}{1+c_n^\pm}\bigr)^{1/2} \bigl(2 \nabla \sqrt{c_n^\pm}  \pm \sqrt{c_n^\pm} \nabla \psi_n\bigr)\Bigr)
\nonumber \\
& + \frac{c_n^\pm}{(1+c_n^\pm)^{3/2}} |\nabla\sqrt{c_n^\pm}|^2
\pm  
\Bigl(
\frac{c_n^\pm}{1+c_n^\pm}\Bigr)^{3/2} \nabla\sqrt{c_n^\pm}\cdot
\nabla \psi_n  \nonumber \\
& = J_1+J_2+J_3+J_4+J_5
\end{align}
where compared to \cite{MaWa}, we need to write $J_2$  as
\begin{align}
& \frac{(c_n^\pm)^{3/2}}
{(1+c_n^\pm)^{3/2}}u_n \cdot \nabla  (c_n^\pm)^{1/2} 
 = u_n \cdot \nabla (1+c_n^\pm)^{1/2}
+ u_n \cdot \nabla (1+c_n^\pm)^{-1/2}
\nonumber \\
& =
{\rm div}\bigl(u_n((1+c_n^\pm)^{1/2}+
(1+c_n^\pm)^{-1/2})\bigr)
- ((1+c_n^\pm)^{1/2}+
(1+c_n^\pm)^{-1/2}){\rm div} u_n.
\end{align}
Let us observe that $J_3\in L^2(0,T;H^{-1}({\Pi^d}))$ since $\sqrt{c_n^\pm}$
belongs to  $L^2(0,T; H^1({\Pi^d}))$ and $\nabla \psi$ belongs to $L^\infty(0,T;L^6({\Pi^d})).$
We also have $J_4 \in L^1((0,T)\times {\Pi^d})$ because $\sqrt{c_n^\pm} \in L^2((0,T);H^1({\Pi^d}))$.
Moreover $\nabla \psi \in L^2(0,T;H^{1}({\Pi^d}))$ and $\nabla\sqrt{c_n^\pm} \in L^2((0,T)\times {\Pi^d})$ thus $J_5\in L^1(0,T;L^{3/2}({\Pi^d}))$. Note also that ${c_n^\pm}^{1/2} u_n \in L^2((0,T)\times {\Pi^d})$ (idem $(c_n^\pm)^{1/2} {\rm div} u_n \in L^1((0,T)\times \Omega$)  because $c_n^\pm \in L^\infty(0,T;L^6({\Pi^d}))$  and 
$$\|u_n\|_{L^{p}(0,T;W^{1,p}({\Pi^d}))} \le C \hbox{ uniformly with respect to } n
$$
where 
$$1/p = 1/2 + 1/8k, \qquad
  1/q = 1/2 + 1/24k.
$$
Therefore we conclude that  $$\partial_t \sqrt{c_n^\pm +1} \in
L^1(0,T;H^{-1}({\Pi^d}))\cap L^1((0,T)\times{\Pi^d})$$
Thus using the Aubin-Lions-Simon Lemma, this provides $\sqrt{c_n^\pm+1}$ compact in $L^1(0,T;L^p({\Pi^d}))$ for all
$1<p<6.$ Then there exists a function $c^\pm$ such that 
$c_n^\pm$ strongly converges to $c^\pm$ in $L^1(0,T;L^p({\Pi^d}))$ for all $1<p<3$ recalling that we have more integrability on $c^\pm$. Therefore we conclude as in \cite{MaWa} that
$$\nabla c_n^\pm = 2 \sqrt{c_n^\pm}
\nabla \sqrt{c_n^\pm}$$ converges
weakly to $2 \sqrt{c^\pm}\nabla
\sqrt{c^\pm} = \nabla c^\pm$
in $L^2(0,T;L^1(\Pi^d))\cap L^1(0,T;L^q(\Pi^d))$ for all $q<3/2$.
Moreover we know that $\nabla\psi_n$ is uniformly bounded in $L^\infty(0,T;H^1({\Pi^d})).$
  Using the strong convergence on $c_n^\pm$ coupled with the weak convergence in $\nabla\psi_n$, we can pass to the limit in $c_n^\pm \nabla \psi_n$. Finally using the uniform bound on $u_n$ we have given in the previous section and the strong convergence on $c^\pm_n$, we can pass to the limit in the term $c_n^\pm u_n$. This ends the proof related to the weak nonlinear stability.


\bigskip

\noindent {\bf Acknowledgments.}   The first author gratefully acknowledges the partial support by the Agence
Nationale pour la Recherche grant ANR-23-CE40-0014-01 (ANR Bourgeons). This work also benefited of the support of the ANR under France 2030 bearing the reference ANR-23-EXMA-0004 (Complexflows project).  The third author gratefully acknowledges the PEPR MathsViVEs in France managed by A. Guillin and V. Calvez for a  three years doctoral fellowship. This work received support from the French government, managed by the National Research Agency (ANR), under the France 2030 program, reference ANR-23-EXMA-0001.The authors want to thank E. Charlaix, M. Meyer and C. Picard (Liphy Grenoble) for the project (physical modeling, numerical development and experiments) related to the saline gradient energy we are working on together.


\end{document}